\newtheorem{Th}{Theorem}[section]
\newtheorem{Lem}[Th]{Lemma}
\newtheorem{Rem}[Th]{Remark}
\newenvironment{altproof}[1]
{\noindent
	{\em Proof of {#1}}.}
{\nopagebreak\mbox{}\hfill $\Box$\par\addvspace{0.5cm}}
\newcommand{\wt}{\widetilde}
\newcommand{\vp}{\varphi}
\newcommand{\eps}{\varepsilon}
\def\div{\mathop{\mathrm{div}\,}}
\def\span{\mathrm{span}}
\def\N{\mathbb{N}}
\def\R{\mathbb{R}}
\def\cl{\mathrm{cl\,}}
\def\tU{{\tilde U}}
\def\V{\mathcal{V}}
\newcommand{\cC}{{\mathcal C}}
\newcommand{\cL}{{\mathcal L}}
\newcommand{\cN}{{\mathcal N}}
\newcommand{\cO}{{\mathcal O}}
\newcommand{\cS}{{\mathcal S}}
\newcommand{\cV}{{\mathcal V}}
\newcommand{\cW}{{\mathcal W}}
\newcommand{\SO}{\cS\cO}
\newcommand{\Om}{\Omega}
\def\curlop{\nabla\times}
\newcommand{\weakto}{\rightharpoonup}
\newcommand{\tu}{\widetilde{u}}
\newcommand{\tv}{\widetilde{v}}
\newcommand{\tphi}{\widetilde{\phi}}
\newcommand{\talpha}{\widetilde{\alpha}}
\newcommand{\tbeta}{\widetilde{\beta}}
\DeclareMathOperator{\dom}{dom}
\DeclareMathOperator*{\essinf}{ess\,inf}
\DeclareMathOperator*{\esssup}{ess\,sup}
\newcommand{\cnabla}{\overset{\circ}{\nabla}}
\numberwithin{equation}{section}
\begin{document}
	\title[Travelling waves for Maxwell's equations]{Travelling waves for Maxwell's equations in nonlinear and symmetric media}

\author[J. Mederski]{Jarosław Mederski}
\address[J. Mederski]{\newline\indent  	
	Institute of Mathematics,		\newline\indent 
	Polish Academy of Sciences, \newline\indent 
	ul. \'Sniadeckich 8, 00-656 Warsaw, Poland}
\email{\href{mailto:jmederski@impan.pl}{jmederski@impan.pl}}			
\author[J. Schino]{Jacopo Schino}
\address[J. Schino]{\newline\indent  	
	Faculty of Mathematics,	Informatics and Mechanics	\newline\indent
	University of Warsaw, \newline\indent
	ul. Banacha 2, 02-097 Warsaw, Poland}
\email{\href{mailto:j.schino2@uw.edu.pl}{j.schino2@uw.edu.pl}}

\date{\today}

\subjclass[2000]{Primary: 35J20, 58E15; Secondary: 47J30, 35Q60}

\keywords{Maxwell equations, Kerr nonlinearity, curl-curl problem, travelling waves, \textit{N}-functions, variational methods}

\begin{abstract}
We look for travelling wave fields 
$$
E(x,y,z,t)= U(x,y) \cos(kz+\omega t)+ \widetilde U(x,y)\sin(kz+\omega t),\quad (x,y,z)\in\mathbb{R}^3,\, t\in\mathbb{R},
$$ 
satisfying Maxwell's equations in a nonlinear and cylindrically symmetric medium. We obtain a sequence of solutions with diverging energy  consisting of transverse magnetic field modes. In addition, we consider a   general nonlinearity, controlled by an \textit{N}-function.
\end{abstract}

\maketitle

\section*{Introduction}
\setcounter{section}{1}

We are interested in  self-trapped beams of light propagating in a nonlinear dielectric and cylindrically symmetric medium by means of solutions to the system of Maxwell’s equations
coupled with a nonlinear constitutive relation between the electric field and the electric
displacement field. More precisely, we are looking for travelling electric fields of the form
\begin{equation}\label{eq:travel_wave}
E(x,y,z,t)= U(x,y) \cos(kz+\omega t)+ \wt U(x,y)\sin(kz+\omega t),
\end{equation}
where $U,\wt U\colon\R^2\to \R^3$ are the profiles of the travelling waves, $\omega>0$ is the temporal frequency and $k\in \R\setminus\{0\}$ the spatial wave number in the direction of propagation.

In modelling optical nonlinear materials \cite{FundPhotonics}, it is natural to seek solutions having cylindrical symmetry, and in physics literature one considers usually transverse electric field modes ({\em TE modes}), or tranverse magnetic field modes ({\em TM modes}) \cite{Chen}. If the electric field is  perpendicular to the direction of propagation, i.e. 
\begin{equation}\label{eq:radialmode}
U(x,y)=\frac{\beta(r)}{r}\begin{pmatrix}
-y\\
x\\
0
\end{pmatrix},\quad  (x,y)\in\R^2,\; r=|(x,y)|,
\end{equation}
then $E=U(x,y) \cos(kz+\omega t)$ is the cylindrically symmetric TE mode ($\wt U=0$), and the system of Maxwell's equations can be reduced to the study of a nonlinear second order differential equation with boundary conditions at infinity, see Stuart \cite{Stuart91,Stuart:1993} and the references therein. If
the magnetic field $B$ is  transverse to the direction of propagation, and takes the form
\begin{equation}\label{eq:def_beta}
B=\frac{\beta(r)}{r}\begin{pmatrix}
-y\\
x\\
0
\end{pmatrix}\cos(kz+\omega t),
\end{equation}
then the equations governing these transverse magnetic field modes are more
complicated than those for TE modes because of the form of the constitutive relation in
a nonlinear optical medium. However, as observed in \cite{StuartZhou01,Chen}, when studying TM-modes, the electric field takes the form \eqref{eq:travel_wave} with two profiles $U$ and $\wt U$, with possibly $U\neq 0$ and $\wt U\neq 0$.

In order to find the exact propagation of the electromagnetic field in a nonlinear medium,
we consider a nonlinear polarization of the form
$\chi(\langle |E|^2\rangle)E,$
where $\chi$ is the scalar nonlinear susceptibility, which depends only on the time averaged intensity of $E$ over one period $T=2\pi/\omega$, that is, $\langle |E(x,y,z)|^2\rangle = \frac{1}{T}\int_0^T |E(x,y,z,t)|^2\,dt$, see \cite{Sutherland2003}.
According to Maxwell's equations, one has to 
solve the {\em nonlinear electromagnetic wave equation}
\begin{equation} \label{newe}
\nabla\times\Big(\frac{1}{\mu}\nabla \times E\Big) + \partial_{tt}\left(\varepsilon E + \chi(\langle |E|^2\rangle)E\right)=0 \quad \hbox{for }(x,y,z,t)\in\R^3\times\R,
\end{equation}
where $\eps$ is the permittivity of the medium, $\mu$ is the magnetic permeability. For simplicity, we assume that $\mu=1$; then, in absence of charges and currents, Faraday's law $\nabla\times E+\partial_t B=0$ determines the magnetic induction $B$, and Amp\'{e}re's law $\nabla\times B=\partial_t D$  together with $\div D=0$ and $\div B=0$ shows that Maxwell's equations are satisfied, where we consider the linear magnetic material law $B=\mu H=H$ and the nonlinear electric material law $D=\eps(x,y) E+\chi(\langle |E|^2\rangle)E$, see \cite{Stuart91,Stuart:1993,Agrawal,MederskiReichel}. Altogether, we find the {\em exact propagation} of the electromagnetic field in a nonlinear medium according to Maxwell's equations. Recall that in {\em cylindrically symmetric} media, $\eps$, $U$, $\wt U$ depend only on $r=|(x,y)|$.

Problem \eqref{newe} has been studied in cylindrically symmetric media in a series of previous works \cite{Stuart04,Stuart91,Stuart:1993,StuartZhou01,StuartZhou96,StuartZhou03,StuartZhou10}, where TE and TM modes have been obtained for asymptotically constant susceptibility $\chi$, i.e. in the so-called {\em saturation effect}. In this work, however, we want  to treat the probably most common type of nonlinearity in the physics and engineering literature, i.e. the {\em Kerr nonlinearity}
\begin{equation*}
\chi(\langle |E^2|\rangle) E = \chi^{(3)}\langle |E|^2\rangle E,\quad \chi^{(3)}>0.
\end{equation*}
Up to our knowledge, the first analytical work in this context is due to McLeod, Stuart and Troy  \cite{McLeodStuartTroy}, where TE modes were obtained. Namely, they showed that $\beta(r)$ must satisfy
\begin{equation}\label{eq:ODE}
\beta''+\frac{1}{r}\beta-\frac{1}{r^2}\beta+\beta^3-\beta=0,\quad \beta(0)=\beta(+\infty)=0
\end{equation}
(see also Pohl \cite{Pohl}), where $\beta$ is as in \eqref{eq:def_beta}, and for a given integer $n\geq 1$, \eqref{eq:ODE} has at least one solutions with precisely $n$ zeros in $(0,+\infty)$ whose derivative has precisely $n+1$ zeros. Then, up to some constant $\eps$ and $\chi^{(3)}$, this leads to infinitely many different TE modes $E=U(r) \cos(kz+\omega t)$ with \eqref{eq:radialmode}. Observe that a TE mode of this form is divergence free and $\wt U=0$.

The aim of this work is to find infinitely many fields $E$ of the form \eqref{eq:travel_wave} which need not be divergence free,  hence different from TE modes. Moreover, we admit a permittivity depending on $r$, $U$ need not be of the form \eqref{eq:radialmode}, and the ODE methods do not seem to be applicable. Our approach is variational, and
recall that the first author and Reichel in \cite{MederskiReichel} have recently observed that the search for travelling waves of the form \eqref{eq:travel_wave} satisfying \eqref{newe} leads to the following nonlinear elliptic problem
\begin{equation}\label{eq_m}
L \begin{pmatrix} U \\ \wt U \end{pmatrix} -\omega^2\epsilon(x,y) \begin{pmatrix} U \\ \wt U \end{pmatrix}= \omega^2\chi\left(\frac{1}{2}(|U|^2+|\wt U|^2)\right) \begin{pmatrix} U \\ \wt U \end{pmatrix},
\end{equation}
where
$$ L = \begin{pmatrix} 
-\partial_{yy}+k^2 & \partial_{xy} & 0  & 0 & 0 & k\partial_x \\
\partial_{xy} & -\partial_{xx}+k^2 & 0 & 0 & 0 & k\partial_y \\
0 & 0 & -\partial_{xx}-\partial_{yy} & k\partial_x & k \partial_y & 0 \\
0 & 0 & -k\partial_x & -\partial_{yy}+k^2 & \partial_{xy} & 0  \\
0 & 0 & -k\partial_y & \partial_{xy} & -\partial_{xx}+k^2 & 0 \\
-k\partial_x & -k\partial_y & 0 & 0 & 0 &-\partial_{xx} - \partial_{yy} 
\end{pmatrix}
$$
is the second-order differential operator $L\colon\dom(L) \subset L^2(\R^2)^6 \to L^2(\R^3)^6$, which is elliptic and self-adjoint,
and
$$
\dom(L):= \left\{\begin{pmatrix} U \\ \wt U \end{pmatrix} \in L^2(\R^2)^6: L\begin{pmatrix} U \\ \wt U \end{pmatrix} \in L^2(\R^2)^6\right\},
$$
where $L((U,\wt U)^T)$ is defined in the sense of distributions. In view of \cite[Theorem 2.3]{MederskiReichel},  the spectrum $\sigma(L)$ equals $\{0\}\cup [k^2,\infty)$, where $0$ is an eigenvalue of infinite multiplicity and $[k^2,\infty)$ consists of absolutely continuous spectrum. In \cite{MederskiReichel}, periodic media have been considered, but we borrow some ideas of the functional setting from this work to investigate the cylindrically symmetric situation.
 
We look for solutions $u\colon \R^2\to\R^6$ to the following slight generalization of \eqref{eq_m} given by
\begin{equation}\label{eq}
Lu-V(x) u=f(u)\quad\hbox{for }x\in\R^2
\end{equation}
involving the operator $L$, where we assume $f(u)=F'(u)$.  From now on, the spacial variable in $\R^2$ will be denoted by $x$ instead of $(x,y)$.
Observe that, if 
\begin{equation}\label{eq:application}
V(x)=\omega^2\eps(x),\quad F(u)=\omega^2\chi\Big(\frac{1}{2}|u|^2\Big),\quad  u\in\R^6,
\end{equation}
then \eqref{eq} leads to \eqref{eq_m} and we obtain the {\em exact} propagation of the travelling electromagnetic waves $(E,B)$, where $E$ is given by \eqref{eq:travel_wave} and $B$ is provided by Faraday's law and a subsequent time integration.

By a {\em (weak) solution} to \eqref{eq} we mean a critical point of the functional
\begin{equation}\label{eq:action}
J(u):=\frac12 b_L(u,u)
-\frac{1}{2}\int_{\R^2}V(x)|u|^2\, dx- \int_{\R^2}F(u) \, dx
\end{equation}
defined on a Banach space $X\subset L^2(\R^2)^6$ given later, where $p>2$. Here, $b_L(\cdot,\cdot)$ is the bilinear form associated with $L$, i.e. $b_L(u,\vp)= \int_{\R^2}\langle Lu,\vp\rangle\, dx$ for all $u\in \dom(L)$ and all $\vp\in \cC_0^\infty(\R^2)^6$.

Now,  for $\alpha,\talpha\in\cC_0^{\infty}(\R^2)$ and for $\beta,\tbeta\in \cC_0^\infty(\R^2)^3$
let us denote
\begin{equation*}
\begin{array}{ll} \cnabla \begin{pmatrix} \alpha\\ \talpha  \end{pmatrix}  := \begin{pmatrix} 
\partial_x \alpha \\
\partial_y \alpha \\
k \talpha \\
\partial_x \talpha \\
\partial_y \talpha \\
-k \alpha 
\end{pmatrix} \colon \R^2\to\R^6,  \qquad
\cnabla\times \begin{pmatrix} \beta\\ \tbeta  \end{pmatrix} := \begin{pmatrix}
\partial_y \beta_3- k\tbeta_2 \\
k\tbeta_1-\partial_x \beta_3 \\
\partial_x\beta_2-\partial_y \beta_1 \\
\partial_y\tbeta_3+k\beta_2 \\
-k\beta_1-\partial_x\tbeta_3\\
\partial_x\tbeta_2-\partial_y\tbeta_1
\end{pmatrix} \colon \R^2\to\R^6,
\end{array}
\end{equation*}
and observe that $L=\cnabla\times\cnabla\times$, $\cnabla\times\cnabla=0$, and thus 
$w:=\cnabla \begin{pmatrix} \alpha\\ \talpha  \end{pmatrix}\in \ker(L)$. Moreover, $J$ may be unbounded above and below and its critical points may have infinite Morse index  due to the infinite-dimensional kernel of $L$.

Due to the strongly indefinite nature of the problem and  in order to control a large class of nonlinear phenomena, we need to introduce the notion of $N$-functions. We assume that
there exists an \textit{N}-function $\Phi \colon \R \to [0,\infty)$ with the following properties.
\begin{itemize}
	\item [(N1)] $\Phi$ satisfies the $\Delta_2$ and $\nabla_2$ conditions globally\footnote{Such conditions are defined in Section \ref{sec:varsetting}.}.
	\item [(N2)] $\displaystyle \lim_{t \to 0} \frac{\Phi(t)}{t^2} = 0$.
	\item [(N3)] $\displaystyle \lim_{t \to \infty} \frac{\Phi(t)}{t^2} = \infty$.
\end{itemize}

Let us collect assumptions about $V$ and $F$:
\begin{itemize}
	\item [(V)] $V \in L^\infty(\R^2)$ is radially symmetric, positive, bounded away from $0$, and $\esssup V < k^2$.
	\item [(F0)] $F\colon\R^6\to\R$ is convex, nonnegative, and of class $\cC^1$.
	\item [(F1)] $f(u) = o(|u|)$ as $u \to 0$.
	\item [(F2)] There exist $c_1 > 0$ such that for every $u \in \R^6$
	\[
	|f(u)| \le c_1 \bigl(1 + \Phi'(|u|)\bigr) \quad \text{for all }  u \in \R^6.
	\]
	\item [(F3)] There exists $\gamma > 2$ and $c_2>0$ such that for every $u \in \R^6$
	\[
	\frac1\gamma \langle f(u),u \rangle \ge F(u)\ge c_2 \Phi(|u|) .
	\]
\end{itemize}
Here and in the sequel, $\langle\cdot,\cdot\rangle$ denotes the Euclidean inner product in $\R^N$, $N \geq 1$. Our model example is $F(u)=\frac1p |u|^p$ with $p>2$, where $p=4$ is responsible for the Kerr effect, and then the $N$-function is of power type $|u|^p$. In general, we can admit more examples, e.g.
\begin{equation}\label{ex:Nonpower}
f(u)=\begin{cases}
|u|^{q-2}u\ln(1+|u|),\quad\hbox{ for }|u|>1,\\
|u|^{p-2}u\ln(2),\quad\hbox{ for }|u|\leq 1,
\end{cases}
\end{equation}
where $p,q>2$.  Then, $\Phi(t)=F(t)=\int_0^t f(s)\,ds$, (F1)--(F3) are satisfied, but $F$ cannot be controlled \textit{from both above and below} by any power-type $N$-function.

Let $g \in\SO(2)$ and let $u\colon \R^2\to\R^6$. We define 
$$\wt g:=\begin{pmatrix} g & 0 & 0 & 0\\
0 & 1  & 0 & 0\\
0 & 0  & g & 0\\
0 & 0  & 0 & 1\end{pmatrix}\in \R^{6\times 6}$$
and
$$(g\star u)(x) :=  \wt g u(g^{-1} x)$$ 
for a.e. $x\in\R^2$.

\begin{Th}\label{th:main} Suppose that (V), (F0)--(F3) are satisfied and $F$ is radial.
Then there exist infinitely many solutions to \eqref{eq} of the form $u_n=v_n+w_n$ such that $J(u_n)\to\infty$ as $n\to\infty$,  $v_n\in H^1(\R^2)^6$, $v_n\neq 0$, $w_n\in L^2(\R^2)^6$, $\Phi(w_n) \in L^1(\R^2)$, $Lw_n=0$, and $g\star u_n =u_n$. Moreover, each $u_n=\begin{pmatrix} U_n \\ \wt U_n \end{pmatrix} $  has the profiles of the form
\begin{equation}\label{eq:shapeofsol}
U_n = \frac{\alpha_n(x)}{|x|} \begin{pmatrix}
x_1\\
x_2\\
0
\end{pmatrix}+ \gamma_n(x) \begin{pmatrix}
0\\
0\\
1
\end{pmatrix},\quad
\wt U_n = \frac{\wt \alpha_n(x)}{|x|} \begin{pmatrix}
x_1\\
x_2\\
0
\end{pmatrix}+ \wt \gamma_n(x) \begin{pmatrix}
0\\
0\\
1
\end{pmatrix},
\end{equation}
for some radial functions $\alpha_n,\wt\alpha_n,\gamma_n,\wt\gamma_n:\R^2\to\R$.
\end{Th}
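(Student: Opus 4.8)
The plan is to realise the solutions as critical points of the action $J$ in a variational framework adapted both to the strongly indefinite structure and to the cylindrical symmetry. First I would fix the functional setting. Using the Helmholtz-type splitting induced by $L=\cnabla\times\cnabla\times$ together with the spectral description $\sigma(L)=\{0\}\cup[k^2,\infty)$, I would decompose the working space as $X=X^+\oplus X^0$, where $X^0=\ker L$ consists of the curl-free fields $\cnabla(\alpha,\talpha)^T$ and $X^+$ is the part on which $L\ge k^2$. On $X^+$ the quadratic form $v\mapsto\frac12 b_L(v,v)-\frac12\int_{\R^2}V|v|^2$ is positive definite and, since $\esssup V<k^2$, equivalent to the square of the $H^1$-norm; on $X^0$ it reduces to $-\frac12\int_{\R^2}V|w|^2$, hence is negative definite, so the indefiniteness of the problem comes entirely from the infinite-dimensional kernel. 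The nonlinear term forces $X^0$ to be measured in the Orlicz norm, giving $X\subset H^1(\R^2)^6\times\bigl(L^2\cap L^\Phi\bigr)(\R^2)^6$; condition (N1) guarantees that $L^\Phi$ is reflexive and that the Orlicz--Sobolev machinery applies, while (N2)--(N3) place $\Phi$ strictly between quadratic growth at the origin and at infinity.

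Next I would exploit the symmetry. By the principle of symmetric criticality it suffices to find critical points of the restriction of $J$ to the fixed-point subspace $X^{\SO(2)}=\{u\in X:\ g\star u=u\ \text{for all }g\in\SO(2)\}$; a direct computation shows that its elements are exactly the fields whose profiles have the form \eqref{eq:shapeofsol}, parametrised by radial scalar functions, which already yields the asserted shape of the solutions. The crucial gain is compactness: on this radial subspace the embeddings $X^{\SO(2)}\hookrightarrow L^2$ and $X^{\SO(2)}\hookrightarrow L^\Phi$ become compact, via a Strauss--Lions type lemma in the Orlicz setting, which is what restores the compactness lost on the full space. With this in place I would carry out the reduction over the kernel: for fixed $v\in X^{\SO(2)}\cap X^+$ the map $w\mapsto J(v+w)$ is strictly concave and anticoercive on $X^{\SO(2)}\cap X^0$, since $b_L(v+w,v+w)=b_L(v,v)$, the term $-\frac12\int_{\R^2}V|v+w|^2$ is concave with $V$ bounded away from $0$, and $-\int_{\R^2}F(v+w)$ is concave by (F0); hence it has a unique maximiser $w=h(v)$ depending $C^1$ on $v$. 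The reduced functional $\cJ(v):=J(v+h(v))$ is then of class $C^1$ on $X^{\SO(2)}\cap X^+$, is even because $F$ is radial, and its critical points $v$ give critical points $u=v+h(v)$ of $J$.

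It remains to produce infinitely many critical values of $\cJ$ diverging to $+\infty$, for which I would apply the symmetric (fountain) theorem. The geometry is furnished by the growth hypotheses: (F1)--(F2) together with (N2) give the subquadratic behaviour near the origin that yields a positive lower bound on small spheres in the cofinite-dimensional subspaces, while (F3) with $\gamma>2$ and (N3) give the superquadratic behaviour that drives $\cJ\to-\infty$ on large spheres of finite-dimensional subspaces. The Ambrosetti--Rabinowitz type inequality $\frac1\gamma\langle f(u),u\rangle\ge F(u)$ in (F3) bounds the Cerami sequences, and their precompactness follows from the compact symmetric embeddings above. The fountain theorem then yields an unbounded sequence of critical values $c_n\to\infty$, hence solutions $u_n=v_n+w_n$ with $v_n\in H^1(\R^2)^6$, $v_n\neq0$, $w_n=h(v_n)\in L^2\cap L^p$, $Lw_n=0$, $g\star u_n=u_n$, and $J(u_n)=c_n\to\infty$.

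I expect the main obstacle to be the interplay between the indefinite quadratic form and the non-power Orlicz nonlinearity when establishing compactness. Verifying that Cerami sequences are bounded and precompact requires controlling the kernel component $h(v)$ uniformly in the $L^\Phi$-norm and transferring this control to the $H^1$-part $v$, all within the Orlicz--Sobolev radial framework, where the usual power-type interpolation and Strauss-lemma arguments must be replaced by their $N$-function analogues; condition (N1) and the two-sided control of $F$ by $\Phi$ in (F2)--(F3) are precisely what make this transfer possible.
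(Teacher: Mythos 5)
Your overall variational scheme (convex reduction over the kernel $\cW$, symmetric mountain-pass/fountain argument on the radial subspace, Cerami sequences bounded via (F3), compactness restored by symmetry) is the same as the paper's. But there is a genuine gap at the step where you claim that ``a direct computation shows'' that the elements of $X_{\SO(2)}$ are \emph{exactly} the fields with profiles of the form \eqref{eq:shapeofsol}. This is false: an $\SO(2)$-equivariant profile decomposes as $U=U_\rho+U_\tau+U_\zeta$, where the tangential part $U_\tau(x)=\frac{\alpha_\tau(x)}{|x|}(-x_2,x_1,0)^T$ is perfectly admissible under equivariance --- indeed the TE-mode profile \eqref{eq:radialmode} is itself $\SO(2)$-equivariant and purely tangential, so it lies in $X_{\SO(2)}$ but is \emph{not} of the form \eqref{eq:shapeofsol}. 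Consequently, running the fountain theorem on $X_{\SO(2)}$ produces solutions, but gives no control on the shape of their profiles and in particular cannot exclude TE-modes, which is the main point of the theorem. The paper closes this gap in Section \ref{sec:NonTE} by introducing two further involutive isometries, $\cS u:=u_\rho-u_\tau+u_\zeta$ and its analogue $\wt\cS$ acting on the second profile, proving via the pointwise curl-orthogonality \eqref{eq:curlsymm} (Lemma \ref{le:ABDF}) that $J|_{X_{\SO(2)}}$ is invariant under them, and then applying Palais' symmetric criticality principle to the smaller fixed-point space $\bigl((X_{\SO(2)})^\cS\bigr)^{\wt\cS}$, whose elements are precisely those with $U=U_\rho+U_\zeta$ and $\wt U=\wt U_\rho+\wt U_\zeta$, i.e. of the form \eqref{eq:shapeofsol}. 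Without this extra layer of symmetry (and the orthogonality computation that legitimizes it), the asserted shape of $u_n$ does not follow.

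Two secondary points. First, your compactness claim is overstated: $X_{\SO(2)}\hookrightarrow L^2$ is never compact (radial symmetry does not help in $L^2$), and no compactness whatsoever holds for the kernel part $\cW_{\SO(2)}$; what the paper uses (Lemma \ref{lem:Cer}) is only the compact embedding of $\cV_{\SO(2)}$ into $L^p(\R^2)^6$, $p>2$, upgraded to $L^\Phi$-convergence by the $N$-function growth conditions, while the kernel component is handled by continuity of the map $v\mapsto w(v)$, not by compactness. Second, you treat the boundedness of Cerami sequences as a routine consequence of (F3), whereas the paper stresses that the standard Ambrosetti--Rabinowitz estimate $\wt J(v_n)-\frac1\gamma\wt J'(v_n)(v_n)$ fails here because of the infinite-dimensional kernel; this is the paper's key technical Lemma \ref{lem:bbd}, which requires the auxiliary inequality of Lemma \ref{lem:AR}, the superquadraticity Lemma \ref{lem:supq}, and the rigidity Lemma \ref{lem:v=0}. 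You correctly identify this as the main obstacle in your final paragraph, but you do not resolve it.
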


Observe that the solutions obtained in Theorem \ref{th:main}, with the first profiles of the form \eqref{eq:shapeofsol}, cannot be TE-modes, i.e. $U$ of the form \eqref{eq:radialmode} and $\wt U=0$. Therefore, we obtain a sequence of solutions with diverging energy that is different from that obtained by McLeod, Stuart, and Troy in \cite{McLeodStuartTroy}.  Moreover, if the electric field $E$ is of the form \eqref{eq:travel_wave} with profiles \eqref{eq:shapeofsol}, then  the magnetic induction
\begin{eqnarray*}
B=\curlop E=	\begin{pmatrix}\partial_{x_2}\gamma_n
-\wt \alpha_nx_2k\\
	-\partial_{x_1}\gamma_n+\wt \alpha_n x_1 k\\
		0
	\end{pmatrix}\cos(kx_3+\omega t)+
	\begin{pmatrix}\alpha_nx_2k+
	\partial_{x_2}\wt \gamma_n\\
	-\alpha_n x_1k -\partial_{x_1}\wt\gamma_n\\
	0
\end{pmatrix}\sin(kx_3+\omega t)
\end{eqnarray*}
is  transverse to the direction of propagation, hence each $u_n$ is a TM-mode. Moreover the {\em 
total electromagnetic energy} per unit interval on the $x_3$-axis is finite, i.e
 \begin{eqnarray}\label{eq:EM_Energy}
\cL(t):=\frac12\int_{\R^2}\int_{a}^{a+1}\langle E,D\rangle +\langle B,H\rangle\,dx_3\, d(x_1, x_2)<+\infty,
\end{eqnarray}
which is important in the study of self-guided beams of light in nonlinear media; see e.g. \cite{Stuart91,Stuart:1993}.

The paper is organized as follows. In Section \ref{sec:varsetting} we build the functional setting for the problem and recall some properties of the operator $L$. Our variational approach based on Cerami sequences is developed in Section \ref{sec:Cerami}. In the last Section \ref{sec:NonTE} we introduce symmetric group actions which guarantee the profiles of the form \eqref{eq:shapeofsol} and exclude TE-modes. We also prove that $\cL(t)$ is finite.

\section{Variational setting}\label{sec:varsetting}


We introduce the following notation. If $u=\begin{pmatrix} U\\ \tU \end{pmatrix}\in\R^6$, then $|u| := \big(|U|^2+|\tU|^2\big)^{1/2}$. In the sequel, $\langle\cdot,\cdot\rangle_2$ denotes the inner product in $L^2(\R^2)^6$ and $|\cdot|_q$ denotes the usual $L^q$-norm for $q\in [1,+\infty]$. Furthermore, we denote by $C$ a generic positive constant which may vary from one inequality to the next. We always assume that $k\neq 0$. By $\lesssim$ we denote the inequality $\leq$ up to a positive multiplicative constant.

Now, let us recall from \cite{RaoRen} some basic facts about \textit{N}-functions and Orlicz spaces. A function $\Phi \colon \R \to [0,\infty)$ is called an \textit{N}-function if and only if it is even, convex, and satisfies
\begin{equation*}
\lim_{t \to 0} \frac{\Phi(t)}{t} = 0, \quad \lim_{t \to \infty} \frac{\Phi(t)}{t} = \infty, \quad \text{and} \quad \Phi(t) > 0 \text{ for all } t > 0.
\end{equation*}
We can associate with it a second function $\Psi \colon \R \to [0,\infty)$ defined as
\[
\Psi(t) := \sup \left\{s |t| - \Phi(s) : s \ge 0\right\},
\]
which is again an \textit{N}-function and is called the \textit{complementary function} of $\Phi$. It is easy to check that $\Phi$ is the complementary function of $\Psi$.

We say that $\Phi$ satisfies the $\Delta_2$ condition globally if and only if there exists $K>0$ such that
\[
\Phi(2t) \le K \Phi(t) \quad \text{for all } t \in \R.
\]
We say it satisfies the $\nabla_2$ condition globally if and only if there exists $K'>1$ such that
\[
\Phi(K't) \ge 2 K' \Phi(t) \quad \text{for all } t \in \R.
\]
Recalling that convex functions are differentiable almost everywhere, we give the following characterization for the $\Delta_2$ and $\nabla_2$ conditions (see \cite[Theorem II.III.3]{RaoRen}).

\begin{Lem}\label{lem:cond2}
The following properties are equivalent.
\begin{itemize}
	\item $\Phi$ satisfies the $\Delta_2$ condition globally.
	\item There exists $\kappa>1$ such that $t \Phi'(t) \le \kappa \Phi(t)$ for a.e. $t \in \R$.
	\item There exists $\kappa'>1$ such that $t \Psi'(t) \ge \kappa' \Psi(t)$ for a.e. $t \in \R$.
	\item $\Psi$ satisfies the $\nabla_2$ condition globally.
\end{itemize}
\end{Lem}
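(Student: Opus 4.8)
The plan is to treat the four statements as two derivative characterizations---one for $\Delta_2$ and one for $\nabla_2$---joined by a duality bridge coming from Young's equality, so that the logical scheme is $(1)\Leftrightarrow(2)\Leftrightarrow(3)\Leftrightarrow(4)$. Throughout I would write $\phi:=\Phi'$ and $\psi:=\Psi'$ for the (a.e.-defined, nondecreasing, right-continuous) densities, so that $\Phi(t)=\int_0^t\phi$ and $\Psi(t)=\int_0^t\psi$, and I would use the two standard facts that $\psi$ is the generalized right-inverse of $\phi$ and that Young's equality $\Phi(s)+\Psi(\phi(s))=s\phi(s)$ holds for every $s\ge0$. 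By evenness of $\Phi$ and $\Psi$ it suffices to argue for $t>0$, where both functions are positive.

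First, $(1)\Leftrightarrow(2)$. For $(1)\Rightarrow(2)$ I would use only monotonicity of $\phi$: since $\phi$ is nondecreasing, $t\phi(t)\le\int_t^{2t}\phi(s)\,ds\le\Phi(2t)\le K\Phi(t)$, so the inequality holds with $\kappa:=\max\{K,2\}>1$. For $(2)\Rightarrow(1)$ I would divide by $t\Phi(t)>0$ and integrate: $(2)$ says $(\log\Phi)'(t)=\phi(t)/\Phi(t)\le\kappa/t$ a.e.\ on $(0,\infty)$, and integrating from $t$ to $2t$ gives $\Phi(2t)\le2^\kappa\Phi(t)$, i.e.\ $(1)$ with $K=2^\kappa$. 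Local absolute continuity of $\Phi$ on $(0,\infty)$ and positivity of $\Phi$ there justify the fundamental theorem of calculus applied to $\log\Phi$.

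Next, the bridge $(2)\Leftrightarrow(3)$, which is where the complementary function enters. Evaluating Young's equality at $s=\psi(t)$, where $\phi(s)=t$, gives $\Phi(\psi(t))=t\psi(t)-\Psi(t)$. If $(2)$ holds, then $s\phi(s)\le\kappa\Phi(s)$ with $s=\psi(t)$ reads $t\psi(t)\le\kappa\bigl(t\psi(t)-\Psi(t)\bigr)$, which rearranges to $t\psi(t)\ge\frac{\kappa}{\kappa-1}\Psi(t)$, i.e.\ $(3)$ with $\kappa'=\kappa/(\kappa-1)>1$. Since $\Phi$ is the complementary function of $\Psi$, the reverse implication $(3)\Rightarrow(2)$ is the identical computation with the roles of $\Phi,\Psi$ interchanged, yielding $\kappa=\kappa'/(\kappa'-1)$.

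Finally, $(3)\Leftrightarrow(4)$, the $\nabla_2$ characterization for $\Psi$. For $(3)\Rightarrow(4)$ I would integrate $(\log\Psi)'(t)=\psi(t)/\Psi(t)\ge\kappa'/t$ from $t$ to $\ell t$ to obtain $\Psi(\ell t)\ge\ell^{\kappa'}\Psi(t)$, and then fix $\ell>1$ so large that $\ell^{\kappa'-1}\ge2$, which gives $(4)$ with $K'=\ell$. The one genuinely delicate implication is $(4)\Rightarrow(3)$: here monotonicity of $\psi$ alone is not enough (it only yields the trivial bound $t\psi(t)\ge\Psi(t)$), so I would instead exploit convexity through the subgradient inequality $\Psi(s)\ge\Psi(t)+\psi(t)(s-t)$ evaluated at the point $s=t/K'<t$, which gives $\psi(t)\,t\,\tfrac{K'-1}{K'}\ge\Psi(t)-\Psi(t/K')$; combining this with the consequence $\Psi(t/K')\le\Psi(t)/(2K')$ of $(4)$ produces $t\psi(t)\ge\frac{2K'-1}{2(K'-1)}\Psi(t)$, and $\frac{2K'-1}{2(K'-1)}>1$ supplies the required $\kappa'$. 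Chaining the three equivalences then proves the lemma. I expect $(4)\Rightarrow(3)$ to be the main obstacle, precisely because it is the only step where the one-sided monotonicity of the density is insufficient and the strict convexity encoded in $\nabla_2$ must be used quantitatively.
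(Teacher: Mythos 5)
You should first know that the paper does not prove this lemma at all: it states it with a pointer to \cite[Theorem II.III.3]{RaoRen}, so your self-contained argument is really being compared with the textbook proof, and indeed it reconstructs that proof's architecture. Three of your four bridges are correct and well quantified: $(1)\Leftrightarrow(2)$ with $\kappa=\max\{K,2\}$ and $K=2^\kappa$, $(2)\Rightarrow(3)$ with $\kappa'=\kappa/(\kappa-1)$, and $(3)\Leftrightarrow(4)$, where your subgradient argument for $(4)\Rightarrow(3)$ giving $t\psi(t)\ge\frac{2K'-1}{2(K'-1)}\Psi(t)$ is exactly right and correctly identified as the step where monotonicity alone is insufficient. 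The genuine gap is $(3)\Rightarrow(2)$, which is \emph{not} ``the identical computation with the roles interchanged''. In $(2)\Rightarrow(3)$ you substitute $s=\psi(t)$ and tacitly use $\phi(\psi(t))\ge t$, which is true for your right-inverse convention and points the helpful way (your parenthetical ``where $\phi(s)=t$'' fails on the gaps of the range of $\phi$, but harmlessly so). The interchanged computation substitutes $t=\phi(s)$ and needs $\psi(\phi(s))\le s$; with $\psi$ the right-inverse one has instead $\psi(\phi(s))\ge s$, strictly on every flat piece of $\phi$, and flat pieces can carry \emph{full} measure: for the piecewise linear N-function with $\phi(s)=2^n$ on $[2^n,2^{n+1})$, $n\in\Z$, one gets $\psi(\phi(s))=2^{n+1}>s$ for a.e.\ $s>0$, so the substitution produces an inequality pointing the wrong way almost everywhere. (Relatedly, in both bridges you should first upgrade the a.e.\ inequalities to everywhere-versions via one-sided continuity of $\phi,\psi$, since for this same example the range of $\psi$ is countable, hence null, and an a.e.\ hypothesis says nothing at the points $s=\psi(t)$.)

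The repair is short, and two options fit your scheme. Either run the reverse bridge with the left-continuous inverse $\psi_-(t):=\inf\{s\ge 0:\phi(s)\ge t\}$: the a.e.\ inequality $(3)$ passes to $t\psi_-(t)\ge\kappa'\Psi(t)$ for all $t>0$ by left limits, Young's equality $\Psi(\phi(s))=s\phi(s)-\Phi(s)$ is valid for every $s$ because $\phi(s)$ is a subgradient of $\Phi$ at $s$, and now $\psi_-(\phi(s))\le s$ holds by definition, yielding $s\phi(s)\ge\kappa'\bigl(s\phi(s)-\Phi(s)\bigr)$, i.e.\ $(2)$ with $\kappa=\kappa'/(\kappa'-1)$. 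Or bypass derivatives entirely in the duality and close the cycle as $(3)\Rightarrow(4)\Rightarrow(1)\Rightarrow(2)$, proving $(4)\Rightarrow(1)$ by direct conjugation (using $\Phi=\Psi^*$):
\begin{equation*}
\Phi(2s)=\sup_{t\ge0}\bigl(2st-\Psi(t)\bigr)=\sup_{\tau\ge0}\bigl(2K's\tau-\Psi(K'\tau)\bigr)\le\sup_{\tau\ge0}\bigl(2K's\tau-2K'\Psi(\tau)\bigr)=2K'\Phi(s),
\end{equation*}
which is the global $\Delta_2$ condition with $K=2K'$. With either patch your proof is complete and matches the cited result in full generality, including the piecewise linear N-functions on which your original symmetry claim breaks down.
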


The set
\[
L^\Phi := \left\{u \colon \R^2 \to \R^6 : \Phi \circ |u| \in L^1(\R^2)\right\}
\]
is a vector space if $\Phi$ satisfies the $\Delta_2$ condition globally. In this case, it is called an Orlicz space. It becomes a Banach space if endowed with the norm
\[
|u|_\Phi := \inf \left\{\alpha > 0 : \int_{\R^2} \Phi\left(\frac{|u|}{\alpha}\right) \, dx \le 1\right\},
\]
and it is reflexive if $\Phi$ satisfies the $\Delta_2$ and $\nabla_2$ conditions globally, with $L^\Psi$ begin its dual space (see \cite[Theorem IV.I.10 and Corollary IV.II.9]{RaoRen}). We can define likewise the space of scalar functions
\[
L^\Phi(\R^2) := \left\{u \colon \R^2 \to \R : \Phi \circ u \in L^1(\R^2)\right\}
\]
and the corresponding norm, still denoted $|\cdot|_\Phi$. Then, $L^\Phi = L^\Phi(\R^2)^6$ and their norms are equivalent (cf. e.g. \cite[Lemma 2.1]{MeScSz}).

An important property is the following one.

\begin{Lem}\label{lem:conj}
If $\Phi$ satisfies the $\Delta_2$ condition globally, then there exists $C>0$ such that
\[
\Psi\bigl(\Phi'(t)\bigr) \le C \Phi(t)
\]
for every $t \in \R$. In particular, if $u \in L^\Phi$, then $\Phi' \circ |u| \in L^\Psi(\R^2)$.
\end{Lem}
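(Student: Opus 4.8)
The plan is to prove the inequality $\Psi(\Phi'(t)) \le C\Phi(t)$ directly from the definition of the complementary function together with the characterization of the $\Delta_2$ condition provided by Lemma \ref{lem:cond2}. Since everything in sight is even, I would assume $t \ge 0$ without loss of generality and handle $t=0$ trivially (both sides vanish). The starting point is Young's equality: because $\Phi$ is convex, for a.e. $t$ the supremum defining $\Psi$ at the point $\Phi'(t)$ is attained at $s=t$, so that
\[
\Psi\bigl(\Phi'(t)\bigr) = t\,\Phi'(t) - \Phi(t).
\]
This is the crucial identity; it replaces the potentially awkward supremum by a concrete expression. From here I would invoke the second bullet of Lemma \ref{lem:cond2}: the global $\Delta_2$ condition gives a constant $\kappa > 1$ with $t\Phi'(t) \le \kappa\,\Phi(t)$ for a.e. $t$. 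Substituting this bound into the Young identity yields
\[
\Psi\bigl(\Phi'(t)\bigr) = t\,\Phi'(t) - \Phi(t) \le \kappa\,\Phi(t) - \Phi(t) = (\kappa - 1)\,\Phi(t),
\]
so the claim holds with $C = \kappa - 1$ for a.e. $t$; the inequality then extends to every $t$ by continuity of both sides, using that $\Phi$ is continuous (being convex and finite-valued) and that $\Phi'$ can be taken to be, say, the right derivative, which is monotone and hence has at most countably many discontinuities.

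For the second assertion I would argue as follows. Suppose $u \in L^\Phi$, so that $\Phi \circ |u| \in L^1(\R^2)$. Applying the pointwise inequality just established with $t = |u(x)|$ gives
\[
\Psi\bigl(\Phi'(|u(x)|)\bigr) \le C\,\Phi(|u(x)|) \quad \text{for a.e. } x \in \R^2,
\]
and integrating over $\R^2$ shows that $\Psi \circ \bigl(\Phi' \circ |u|\bigr) \in L^1(\R^2)$, which is precisely the statement that $\Phi' \circ |u| \in L^\Psi(\R^2)$ (recall that $\Psi$ itself satisfies $\Delta_2$ by Lemma \ref{lem:cond2}, so membership in the Orlicz space is equivalent to integrability of $\Psi$ composed with the function).

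The main obstacle, such as it is, lies in making the Young-equality identity fully rigorous, since $\Phi'$ exists only almost everywhere and the equality $\Psi(\Phi'(t)) = t\Phi'(t) - \Phi(t)$ should be justified as holding at every point of differentiability. Concretely, at a point $t$ where $\Phi$ is differentiable, convexity gives $\Phi(s) \ge \Phi(t) + \Phi'(t)(s - t)$ for all $s$, which rearranges to $s\Phi'(t) - \Phi(s) \le t\Phi'(t) - \Phi(t)$; thus the supremum over $s \ge 0$ defining $\Psi(\Phi'(t))$ is bounded above by $t\Phi'(t) - \Phi(t)$ and attained at $s = t$, giving equality. One must also confirm that $s = t$ is admissible in the supremum (it is, since $t \ge 0$) and that $\Phi'(t) \ge 0$ for $t \ge 0$ so that the complementary function is evaluated at a legitimate argument. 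These are routine convexity facts, and none presents a genuine difficulty; the proof is short once the Young identity is in hand.
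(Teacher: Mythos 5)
Your proof is correct and takes essentially the same route as the paper's one-line argument, which likewise combines the Young equality $\Psi\bigl(\Phi'(t)\bigr) = t\Phi'(t) - \Phi(t)$ (cited there from Rao--Ren, whereas you derive it from the subgradient inequality) with the bound $t\Phi'(t) \le \kappa\Phi(t)$ of Lemma \ref{lem:cond2}. One parenthetical slip: under the hypothesis $\Phi \in \Delta_2$, Lemma \ref{lem:cond2} gives that $\Psi$ satisfies $\nabla_2$, not $\Delta_2$; but that remark is not needed, since $L^\Psi(\R^2)$ is defined by finiteness of $\int_{\R^2} \Psi(\cdot)\, dx$, which your integration step establishes directly.
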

\begin{proof}
From Lemma \ref{lem:cond2} and \cite[Theorem I.III.3]{RaoRen}, there holds
\[
\Psi\bigl(\Phi'(t)\bigr) = t \Phi'(t) - \Phi(t) \le (\kappa - 1) \Phi(t).\qedhere
\]
\end{proof}

We conclude with the following properties about convergence and boundedness.

\begin{Lem}\label{lem:CvBdd}
Consider $(u_n) \subset L^\Phi$.
\begin{itemize}
	\item Let $u \in L^\Phi$. If $\lim_n |u_n - u|_\Phi = 0$, then $\lim_n \int_{\R^2} \Phi(u_n - u) \, dx = 0$. The opposite implication holds if $\Phi$ satisfies the $\Delta_2$ condition globally.
	\item Assume that $\Phi$ satisfies the $\Delta_2$ condition globally. Then, $(u_n)$ is bounded if and only if $\left(\int_{\R^2} \Phi(|u_n|) \, dx\right)$ is bounded.
\end{itemize}
\end{Lem}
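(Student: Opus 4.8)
The plan is to prove both statements by systematically comparing the Luxemburg norm $|\cdot|_\Phi$ with the \emph{modular} $\rho(u):=\int_{\R^2}\Phi(|u|)\,dx$, using only two elementary consequences of the convexity of $\Phi$ together with $\Phi(0)=0$: first, $\Phi(\lambda t)\le\lambda\Phi(t)$ whenever $0\le\lambda\le1$, and second (equivalently, with $\lambda=1/\alpha$), $\Phi(t/\alpha)\le\alpha^{-1}\Phi(t)$ whenever $\alpha\ge1$; both hold for every $t\ge0$, and $\Phi$ is nondecreasing on $[0,\infty)$. I will also use repeatedly that, since $\alpha\mapsto\int_{\R^2}\Phi(|u|/\alpha)\,dx$ is nonincreasing, the set defining the infimum is up-closed, so that $\int_{\R^2}\Phi(|u|/\alpha)\,dx\le1$ for every $\alpha>|u|_\Phi$. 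The $\Delta_2$ condition will be invoked only in its iterated form $\Phi(2^m t)\le K^m\Phi(t)$, whose role is precisely to control $\Phi$ under \emph{dilations} of its argument.

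For the first bullet, the forward implication needs no $\Delta_2$. Setting $v_n:=u_n-u$, I take $n$ so large that $|v_n|_\Phi<1$ and pick any $\alpha\in(|v_n|_\Phi,1]$; then $\int_{\R^2}\Phi(|v_n|/\alpha)\,dx\le1$, and the first convexity inequality with $\lambda=\alpha$ gives $\Phi(|v_n|)=\Phi(\alpha\,|v_n|/\alpha)\le\alpha\,\Phi(|v_n|/\alpha)$ pointwise, whence $\rho(v_n)\le\alpha$; letting $\alpha\downarrow|v_n|_\Phi$ yields $\rho(v_n)\le|v_n|_\Phi\to0$. For the converse I assume $\Delta_2$ and $\rho(v_n)\to0$, fix $\eps\in(0,1)$, and choose $m\in\N$ with $2^m\ge1/\eps$; monotonicity and the iterated bound give $\Phi(|v_n|/\eps)\le\Phi(2^m|v_n|)\le K^m\Phi(|v_n|)$ pointwise, so $\int_{\R^2}\Phi(|v_n|/\eps)\,dx\le K^m\rho(v_n)\le1$ for $n$ large, i.e. $|v_n|_\Phi\le\eps$. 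Since $\eps$ is arbitrary, $|v_n|_\Phi\to0$.

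For the second bullet, the implication ``modular bounded $\Rightarrow$ norm bounded'' again needs no $\Delta_2$: if $\rho(u_n)\le R$ for all $n$, I set $\alpha:=\max\{R,1\}\ge1$ and use the second convexity inequality to obtain $\int_{\R^2}\Phi(|u_n|/\alpha)\,dx\le\alpha^{-1}\rho(u_n)\le R/\alpha\le1$, so that $\alpha$ lies in the defining set and $|u_n|_\Phi\le\alpha$. The reverse implication is where $\Delta_2$ is indispensable: assuming $|u_n|_\Phi\le M$, I fix $\alpha_0>M$ and $m\in\N$ with $2^m\ge\alpha_0$, so that $w_n:=|u_n|/\alpha_0$ satisfies $\int_{\R^2}\Phi(w_n)\,dx\le1$ by up-closedness, and then $\Phi(|u_n|)=\Phi(\alpha_0 w_n)\le\Phi(2^m w_n)\le K^m\Phi(w_n)$ integrates to $\rho(u_n)\le K^m$.

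Once the two convexity inequalities and the up-closedness remark are recorded, the computations are routine and I do not expect a genuine obstacle. The only point that demands care is recognizing exactly \emph{where} $\Delta_2$ is truly needed, namely in the two ``dilation'' directions (recovering norm smallness from modular smallness, and modular boundedness from norm boundedness), both of which fail in general without it, and, correspondingly, resisting the temptation to invoke it in the two directions that hold unconditionally.
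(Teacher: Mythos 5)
Your proof is correct in all four directions, and each use (or avoidance) of $\Delta_2$ is placed exactly where it belongs. The paper itself gives no proof of this lemma: it is stated as a collection of standard Orlicz-space facts (implicitly from the Rao--Ren monograph cited just above it), so there is no argument of the authors' to compare yours against. Your route is the classical modular-versus-Luxemburg-norm comparison: the two unconditional directions ($|v_n|_\Phi\to 0$ implies $\int\Phi(|v_n|)\,dx\to 0$, and modular boundedness implies norm boundedness) follow from $\Phi(\lambda t)\le\lambda\Phi(t)$ for $\lambda\in[0,1]$ together with the up-closedness of the set defining the infimum, while the two ``dilation'' directions use the iterated form $\Phi(2^m t)\le K^m\Phi(t)$ of $\Delta_2$. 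All steps check out, including the small but necessary observations that the defining set of the Luxemburg norm is up-closed (so $\int_{\R^2}\Phi(|u|/\alpha)\,dx\le 1$ for every $\alpha>|u|_\Phi$) and that $\Phi$ is nondecreasing on $[0,\infty)$ by convexity and $\Phi(0)=0$. One purely expository remark: the statement writes $\Phi(u_n-u)$ for a vector-valued argument, and your reading of it as $\Phi(|u_n-u|)$ is the intended one, consistent with the definition of $L^\Phi$ earlier in the section.
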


Let us introduce the space
\begin{eqnarray*}
	\cV&:=&\Big\{u=\begin{pmatrix} U \\ \wt U \end{pmatrix} \in H^1(\R^2)^6: \Big\langle u,\cnabla \begin{pmatrix} \alpha\\ \talpha  \end{pmatrix}\Big\rangle_2=0 \mbox{ for any }\alpha,\talpha\in\cC_0^{\infty}(\R^2)\Big\}\\
	&=&\Big\{u=\begin{pmatrix} U \\ \wt U \end{pmatrix} \in H^1(\R^2)^6: \partial_{x_1} u_1 +\partial_{x_2} u_2 +k \tu_3=0, \partial_{x_1} \tu_1 +\partial_{x_2} \tu_2 -ku_3=0 \mbox{ a.e. in }\R^2\Big\},
\end{eqnarray*}
and note that it is a closed subspace of $H^1(\R^2)^6$. Let us consider the norm in $\cV$
$$\|u\|:=\Big(\sum_{i=1}^3|\nabla u_i|^2_2+k^2|u_i|^2_2+|\nabla \tu_i|^2_2+k^2|\tu_i|^2_2\Big)^{1/2},$$
which is equivalent to the standard $H^1$-norm. Let $\cW$ be the completion of 
vector fields $w=\cnabla\begin{pmatrix} \phi\\ \tphi \end{pmatrix}$, where $\phi,\tphi\in\cC_0^{\infty}(\R^2)^3$, with respect to the norm
$$\|w\|:=\big(|w|_2^2+|w|_\Phi^2\big)^{1/2}$$
so that $\cW\subset L^2(\R^2)^6\cap L^\Phi$.
We will see that $\cV\cap\cW=\{0\}$ (cf. Theorem \ref{th:Helmholtz} below), and we define a norm on the space
\begin{equation*}
X := \cV\oplus\cW
\end{equation*}
as
$$\|v+w\|^2=\|v\|^2+\|w\|^2,\quad v\in\cV, w\in\cW.$$

If $\Phi$ is the \textit{N}-function in (F2)--(F3), then, from the $\Delta_2$ condition -- cf. (N1) -- and Lemma \ref{lem:cond2}, it has at most a power-like growth at infinity. Taking (N3) into account, there exists $p > 2$ such that
\begin{equation*}
\lim_{|t| \to \infty} \frac{|\Phi'(t)|}{|t|^{p-1}} = 0.
\end{equation*}
Then, using also (N2), for every $t\in\R$ we have
$\Phi(t)\lesssim |t|^2+|t|^p$, thus
$$
\cV\subset H^1(\R^2)^6\subset L^2(\R^2)^6 \cap L^p(\R^2)^6 \subset L^\Phi.
$$
Moreover, using Lemma \ref{lem:conj} and arguing as in \cite[Lemma 2.1]{Clement}, we can prove that $J \in \cC^1(X)$.

Next, again from the $\Delta_2$ condition and Lemma \ref{lem:cond2}, $\Phi$ has at most a power-type behaviour at the origin. Taking (N2) into account, there exists $q > 2$ such that
\[
\lim_{t \to 0} \frac{|\Phi'(t)|}{|t|^{q-1}} = \infty.
\]
Then, using also (N3) and since we can assume $q=p$, we have $|t|^p \lesssim \Phi(t)$ for every $t \in [-1,1]$ and $|t|^2 \lesssim \Phi(t)$ for every $t \in (-\infty,-1) \cup (1,\infty)$, thus
$$
L^\Phi \subset L^2(\R^2)^6 + L^p(\R^2)^6.
$$
Exploiting Lemma \ref{lem:cond2} once again, we infer
$$
L^2(\R^2)^6\cap L^{p'}(\R^2)^6\subset L^{\Psi},
$$
where $\frac{1}{p}+\frac{1}{p'}=1$. This will be used in the proof of Lemma \ref{th:Helmholtz} below.

Recall that the second-order differential operator $L\colon\dom(L) \subset L^2(\R^2)^6 \to L^2(\R^3)^6$ is elliptic and self-adjoint on the domain $\dom(L) = \{u\in L^2(\R^2)^6 : Lu\in L^2(\R^2)^6\}$. Its associated bilinear form $b_L: \dom(b_L)\times\dom(b_L)\to \R$ is given by 
$$
b_L(u,v)= \int_{\R^2} \cnabla\times u \cdot \cnabla\times v\,dx
$$
with $\dom(b_L)= \{u\in L^2(\R^2)^6 : \cnabla\times u\in L^2(\R^2)^6\}$.
Recall also that the operator $L\colon \dom(L) \subset L^2(\R^2)^6 \to L^2(\R^3)^6$ has spectrum $\sigma(L)=\{0\}\cup [k^2,\infty)$, where $0$ is an eigenvalue of infinite multiplicity and $[k^2,\infty)$ consists of absolutely continuous spectrum. Finally,
$$b_L(v,v)=\|v\|^2,\quad\hbox{for }v\in\cV.$$

\begin{Th}\label{th:Helmholtz}
	The spaces $\cV$ and $\cW$ are closed subspaces of $L^2(\R^2)^6$ and orthogonal with respect to $\langle \cdot,\cdot\rangle_2$, and $X=\cV\oplus\cW$ is the completion of  $\cC_0^{\infty}(\R^2)^6$ with respect to the norm $\|\cdot\|$. Moreover, $\cl_{L^\Phi}\cV\cap \cl_{L^\Phi}\cW=\{0\}$.
\end{Th}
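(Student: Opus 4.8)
The plan is to prove, in order, the $L^2$-orthogonality of $\cV$ and $\cW$ (which yields both the directness of the sum and, together with completeness, that they are closed subspaces of $L^2$), then the identification of the $\|\cdot\|$-completion of $\cC_0^\infty(\R^2)^6$ with $X=\cV\oplus\cW$, and finally the transversality in $L^\Phi$, which is the delicate point. Throughout I use the formal adjoint $\cnabla^*$ of $\cnabla$, determined by $\langle\cnabla(\alpha,\talpha)^T,u\rangle_2=\langle(\alpha,\talpha)^T,\cnabla^*u\rangle_2$, so that $\cV=\{u\in H^1(\R^2)^6:\cnabla^*u=0\}$; a one-line computation gives the identity $\cnabla^*\cnabla=(k^2-\Delta)\,\mathrm{Id}$, which will be used repeatedly. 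For the orthogonality, if $v\in\cV$ and $w=\cnabla(\phi,\tphi)^T$ is a generator of $\cW$, then $\langle v,w\rangle_2=0$ directly from the definition of $\cV$; as $\|\cdot\|\ge|\cdot|_2$ on $\cW$ the generators are $L^2$-dense in $\cW$, so $\cV\perp\cW$. Completeness of $\cV$ is clear (it is closed in $H^1\hookrightarrow L^2$), and $\cW$ embeds injectively into $L^2\cap L^\Phi$: a $\|\cdot\|$-Cauchy sequence of generators is Cauchy in $L^2$ and in $L^\Phi$, hence converges to some $w\in L^2\cap L^\Phi$, and if its $L^2$-limit vanishes then, passing to an a.e.-convergent subsequence and invoking the $\Delta_2$ condition through Lemma~\ref{lem:CvBdd}, its $L^\Phi$-limit vanishes as well. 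Thus $\cV$ and $\cW$ are (complete) subspaces of $L^2$, and $\cV\cap\cW=\{0\}$ follows from $\cV\perp\cW$, so $X$ is well defined.

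For the completion, I would use the explicit Helmholtz projection. Given $\varphi\in\cC_0^\infty(\R^2)^6$, put $w:=\cnabla(k^2-\Delta)^{-1}\cnabla^*\varphi$ and $v:=\varphi-w$; the identity $\cnabla^*\cnabla=(k^2-\Delta)\,\mathrm{Id}$ gives $\cnabla^*v=0$, so $v\in\cV$, while $(k^2-\Delta)^{-1}\cnabla^*\varphi\in H^2$ (indeed smooth) is an $H^2$-limit of test functions, so applying $\cnabla$ and using $H^1\hookrightarrow L^2\cap L^p\hookrightarrow L^\Phi$ gives $w\in\cW$. Hence $\cC_0^\infty(\R^2)^6\subset X$. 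Conversely $\cnabla\cC_0^\infty\subset\cC_0^\infty$ shows $\cW\subset\cl_{\|\cdot\|}\cC_0^\infty$, and for $v\in\cV$ one approximates $v$ by $\varphi_n\to v$ in $H^1$, splits $\varphi_n=v_n+w_n$ as above, and uses the boundedness of $(k^2-\Delta)^{-1}$ on $L^2$ together with $\cnabla^*\varphi_n\to\cnabla^*v=0$ to get $w_n\to0$ in $H^1$ and $v_n\to v$; thus $\cV\subset\cl_{\|\cdot\|}\cC_0^\infty$, and $X$ is the desired completion.

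The main obstacle is the final claim $\cl_{L^\Phi}\cV\cap\cl_{L^\Phi}\cW=\{0\}$: the $L^2$-orthogonality is useless here, since $L^\Phi$ carries no inner product and elements of the closures need not lie in $L^2$. My plan is to turn membership in each closure into a distributional identity and then appeal to ellipticity. Let $u\in\cl_{L^\Phi}\cV\cap\cl_{L^\Phi}\cW$ and choose $v_n\in\cV$ and generators $w_n=\cnabla(\phi_n,\tphi_n)^T$ with $v_n,w_n\to u$ in $L^\Phi$. Because $\cC_0^\infty\subset L^2\cap L^{p'}\subset L^\Psi=(L^\Phi)'$, I can pass to the limit in the Orlicz duality pairing: from $\langle v_n,\cnabla(\alpha,\talpha)^T\rangle_2=0$ and $\langle w_n,(\cnabla\times)^*\psi\rangle_2=\langle\cnabla\times w_n,\psi\rangle_2=0$ (valid since each $w_n$ is a gradient, so $\cnabla\times w_n=0$), letting $n\to\infty$ I obtain $\cnabla^*u=0$ and $\cnabla\times u=0$ in the sense of distributions. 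Thus $Du=0$ for the first-order constant-coefficient system $D:=(\cnabla\times,\cnabla^*)$. Passing to the Fourier transform and using that $\cnabla^*\cnabla=(k^2-\Delta)\,\mathrm{Id}$ has symbol $(k^2+|\xi|^2)\,\mathrm{Id}$, an elementary linear-algebra check (carried out componentwise, using only $k\ne0$) shows that the symbol $\widehat D(\xi)$ is injective for \emph{every} $\xi\in\R^2$, including $\xi=0$; hence $\widehat D(\xi)^*\widehat D(\xi)$ is invertible for all $\xi$ with inverse a smooth, polynomially bounded Fourier multiplier. Since $u\in L^\Phi\subset L^2+L^p\subset\cS'$, the relation $\widehat D(\xi)\widehat u=0$ then forces $\widehat u=(\widehat D^*\widehat D)^{-1}\widehat D^*\widehat D\,\widehat u=0$, i.e. $u=0$. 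The genuinely nontrivial step is exactly this passage from $u\in L^\Phi$ with $\cnabla\times u=\cnabla^*u=0$ to $u=0$: the difficulty is that $u$ need not belong to $L^2$, which is why the naive energy argument $b_L(u,u)=0$ fails and the symbol computation is indispensable.
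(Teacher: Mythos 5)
Your proposal is correct, and on the first two assertions it is essentially the paper's own argument in operator notation: your projector $w=\cnabla(k^2-\Delta)^{-1}\cnabla^*\vp$ is exactly the paper's construction (there one solves $-\Delta \alpha+k^2\alpha=-(\partial_{x_1}\phi_1+\partial_{x_2}\phi_2+k\tphi_3)$, similarly for $\talpha$, and sets $\vp_\cW=\cnabla(\alpha,\talpha)^T$), while your way of proving $\cV\subset\cl_{\|\cdot\|}\cC_0^\infty(\R^2)^6$ -- boundedness of $(k^2-\Delta)^{-1}\colon L^2\to H^2$ plus $\cnabla^*\vp_n\to\cnabla^*v=0$ -- is in fact more direct than the paper's, which instead deduces $\vp_\cW^n\to0$ from the existence of a continuous $L^2\cap L^\Phi$-projection onto $\cW$. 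Where you genuinely diverge is the final claim $\cl_{L^\Phi}\cV\cap\cl_{L^\Phi}\cW=\{0\}$. The paper stays entirely within duality: for $u$ in the intersection and an arbitrary $\vp\in\cC_0^\infty(\R^2)^6$ it decomposes $\vp=\vp_\cV+\vp_\cW$, uses that \emph{both} components lie in $L^2\cap L^{p'}\subset L^\Psi$ (this is precisely why the Bessel-potential regularity of $\vp_\cW$ is recorded in the first half of the proof), and passes to the limit in the two orthogonality relations to conclude $\int_{\R^2}u\,\vp\,dx=0$ for every test function, hence $u=0$. You instead upgrade membership in the two closures to the distributional system $\cnabla^*u=\cnabla\times u=0$ (this step is the same $L^\Phi$--$L^\Psi$ pairing the paper uses) and then conclude by ellipticity of the overdetermined operator $D=(\cnabla\times,\cnabla^*)$, whose symbol is injective at \emph{every} $\xi\in\R^2$, including $\xi=0$, because $k\neq0$; your componentwise check of this is correct. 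One remark: injectivity of a polynomial symbol at each $\xi$ does not by itself give polynomial bounds on $(\wh{D}^*\wh{D})^{-1}$ and its derivatives (for general systems this needs a Łojasiewicz-type inequality), but here the point is moot because the computation yields the exact identity $D^*D=\cnabla\times\cnabla\times+\cnabla\cnabla^*=(k^2-\Delta)\Id_6$ (the analogue of $\mathrm{curl}\,\mathrm{curl}-\nabla\,\div=-\Delta$), so $Du=0$ gives $(k^2+|\xi|^2)\wh{u}=0$ in $\cS'$ and hence $\wh{u}=0$, since $(k^2+|\xi|^2)^{-1}$ is smooth with all derivatives bounded. As for what each approach buys: the paper's argument avoids Fourier analysis altogether but leans on the $L^\Psi$-integrability of the Helmholtz components of test functions; yours needs only $u\in L^\Phi\subset L^2+L^p\subset\cS'$ and standard constant-coefficient ellipticity, is robust under changes of the Orlicz framework, and as a byproduct the same symbol argument reproves Lemma \ref{lem:v=0} without the detour through harmonic functions.
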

\begin{proof} In order to get the Helmholtz decomposition
we argue in a similar way as in \cite[Theorem 2.1]{MederskiReichel}, and for the reader's convenience we sketch this part, which will be useful to prove also the latter statement that $\cl_{L^\Phi}\cV\cap \cl_{L^\Phi}\cW=\{0\}$.

Note that $\cV$ and $\cW$ are closed subspaces of $L^2(\R^2)^6$ and orthogonal with respect to $\langle \cdot,\cdot\rangle_2$. Let  $\vp=\begin{pmatrix} \phi\\ \tphi \end{pmatrix}\in\cC_0^{\infty}(\R^2)^6$ and let $\alpha, \talpha\in H^{1}(\R^2)\cap \cC^{\infty}(\R^2)$ be the unique solutions to 
	$$-\Delta \alpha+k^2 \alpha=-\big(\partial_{x_1} \phi_1 +\partial_{x_2} \phi_2 +k \tphi_3\big)$$
	and
	$$-\Delta \talpha+k^2 \talpha=-\big(\partial_{x_1} \tphi_1 +\partial_{x_2} \tphi_2 -k \phi_3\big)$$
	respectively. Since $\alpha$ is the Bessel potential of the $\cC_0^\infty$-function $-(\partial_{x_1} \phi_1 +\partial_{x_2} \phi_2 +k \tphi_3)$ we find that $\alpha\in W^{l,s}(\R^2)$ for every $l\in \N$ and $1<s<\infty$, cf. \cite{stein}, and similarly for $\wt \alpha$. Moreover, we find $ \alpha_n,\talpha_n\in \cC_0^{\infty}(\R^2)$ such that $\alpha_n\to\alpha$ and $\talpha_n\to\talpha$ in $W^{l,s}(\R^2)$ for any $l\in\N$ and $1<s<\infty$. In particular, this implies
	$$\vp_\cW:=\cnabla \begin{pmatrix} \alpha\\ \talpha  \end{pmatrix}=\lim_{n\to\infty}
	\cnabla \begin{pmatrix} \alpha_n\\ \talpha_n  \end{pmatrix} \in\cW,$$
	and $\vp_\cW\in W^{l,s}(\R^2)$ for all $l\in \N$ and $1<s<\infty$. In addition, $\cW$ is clearly contained in the completion of   $\cC_0^{\infty}(\R^2)^6$ with respect to the norm $\|\cdot\|$. Since, in particular, $\vp_\cW\in H^1(\R^2)^6$, we obtain by integration by parts 
	\begin{eqnarray*}
		\Big\langle \vp, \cnabla \begin{pmatrix} \beta\\ \tbeta  \end{pmatrix}\Big\rangle_2
		&=&-\int_{\R^2} (\partial_{x_1} \phi_1 +\partial_{x_2} \phi_2 +k \tphi_3)\beta+
		(\partial_{x_1} \tphi_1 +\partial_{x_2} \tphi_2 -k \phi_3)\tbeta\,dx\\
		&=&
		\int_{\R^2} (-\Delta \alpha+k^2 \alpha)\beta+
		(-\Delta \talpha+k^2 \talpha)\tbeta\,dx
		=
		\Big\langle \vp_\cW, \cnabla \begin{pmatrix} \beta\\ \tbeta  \end{pmatrix}\Big\rangle_2
	\end{eqnarray*}
	for every $\beta,\tbeta\in\cC_0^{\infty}(\R^2)$. Therefore, $\vp_\cV:=\vp-\vp_\cW\in\cV$, and
	we obtain the following Helmholtz-type decomposition
	$$\vp =\vp_\cV+\vp_\cW\quad\hbox{with }\vp_\cV\in\cV\hbox{ and }\vp_\cW\in\cW.$$
	Moreover, we have shown that $\vp\in X$. It remains to show that also $\cV$ is contained in the completion of $\cC_0^\infty(\R^2)^6$ with respect to $\|\cdot\|$. To see this, let $v\in\cV$ and take $(\vp_n)\subset \cC_0^{\infty}(\R^2)^6$
	such that $\vp_n\to v$ in $H^1(\R^2)^6$. Let us decompose $\vp_n=\vp_\cV^n+\vp_\cW^n\in\cV\oplus\cW$ and let $\cl_{L^2\cap L^\Phi}\cV$ denote the closure of $\cV$ in $L^2(\R^2)^6\cap L^\Phi$. Observe that $\big(\cl_{L^2\cap L^\Phi}\cV\big)\cap \cW=\{0\}$, so there is a continuous $L^2(\R^2)^6\cap L^\Phi$-projection of $\big(\cl_{L^2\cap L^\Phi}\cV\big)\oplus \cW$ onto $\cW$. Since $\vp_n\to v$ in $L^2(\R^2)^6\cap L^\Phi$, we infer that $\vp_\cW^n\to 0$ in $\cW$. Similarly, arguing with the closure of $\cW$ in $H^1(\R^2)^6$ we get  $\vp_\cV^n\to v$ in $\cV$. Therefore
	$$\|v-\vp_n\|^2=\|v-\vp_\cV^n\|^2+\|\vp^n_\cW\|^2\to 0$$ as $n\to\infty$, and we conclude that $\cV\oplus\cW$ is the completion of  $\cC_0^{\infty}(\R^2)^6$ with respect to the norm $\|\cdot\|$.

	Now, let $u \in \cl_{L^\Phi}\cV\cap \cl_{L^\Phi}\cW$ and $\vp=\vp_\cV+\vp_\cW$ as above.
Consider $(\vp_n) \subset \cV$ and $(\psi_n) \subset \cC_0^\infty(\R^2)$ such that $\lim_n |u - \vp_n|_\Phi = \lim_n |u - \cnabla\psi_n|_\Phi = 0$ as $n \to \infty$.
Observe that $\vp_\cV,\vp_\cW \in L^2(\R^2)^6 \cap L^{p'}(\R^2)^6 \subset L^\Psi$, whence
	\begin{eqnarray*}
		\int_{\R^2}u \vp_\cW\,dx &= \lim_{n\to\infty}
		\int_{\R^2}\vp_n \cnabla\vp\,dx=0,\\
		\int_{\R^2}u \vp_\cV\,dx &= \lim_{n\to\infty}
		\int_{\R^2}\cnabla\psi_n \vp_\cV\,dx=0.
	\end{eqnarray*}
Therefore, $\int_{\R^2}u \vp\,dx=0$ for every $\vp\in\cC_0^{\infty}(\R^2)^6$, and $u=0$.	
\end{proof}

\section{Analysis of Cerami sequences}\label{sec:Cerami}

Note that (F1) and (F2) imply that  for any $\eps>0$ there is $C_\eps>0$ such that 
\begin{equation}\label{eq:Fineq}
F(u)\leq \eps |u|^2+C_\eps \Phi(u) \quad \text{for every } u\in\R^6.
\end{equation}
For $v \in \cV$, let $w(v)$ denote the unique minimizer of
$$ w\mapsto \frac{1}{2}\int_{\R^2}V(x)|v+w|^2\, dx+\int_{\R^2}F(v+w) \, dx$$
in $\cW\subset L^2(\R^2)^6\cap L^\Phi$. Let $\wt J\colon \cV\to\R$ be given by
\begin{equation*}
\wt J(v) :=J(v+w(v))=\frac12 \big(|\nabla v|^2_2+k^2|v|^2_2\big)-\Big(\frac{1}{2}\int_{\R^2}V(x)|v+w(v)|^2\, dx+\int_{\R^2}F(v+w(v)) \, dx\Big).
\end{equation*}
In a way similar to \cite[Proof of Theorem 4.4]{BartschMederskiJFA} we show that $\widetilde{J} \in \cC^1(\cV)$ and
\[
\widetilde{J}'(v)(\vp) = J'(v + w(v))(\vp)
\]
for every $v,\vp \in \cV$. In view of (V) and \eqref{eq:Fineq}, we find $r>0$ such that
\begin{equation}\label{eq:pos}
\inf_{\|v\|=r,\, v\in \cV} \wt J(v)\geq \inf_{\|v\|=r,\, v\in \cV} J(v)>0.
\end{equation}


Arguing as in the proof of \cite[Lemma 6.3]{MederskiReichel} and using Lemma \ref{lem:conj} we obtain the following property.

\begin{Lem}\label{lem:AR}
If (F3) is satisfied, then for every $\eps > 0$ there exists $c_\eps > 0$ such that for every $u = v + w \in \cV \oplus \cW$ and every $t \in [0,\sqrt{1-2/\gamma}]$
\[
J(u) \ge J(tv) + J'(u)\biggl(\frac{1-t^2}{2} u + t^2 w\biggr) - \eps t^2 |u|_2 |w|_2 - t^2 c_\eps \big|\Phi'(|u|)\big|_\Psi |w|_\Phi.
\]
\end{Lem}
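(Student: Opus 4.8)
The plan is to expand the single quantity
$$
D(t):=J(u)-J(tv)-J'(u)\Bigl(\tfrac{1-t^2}{2}u+t^2w\Bigr)
$$
and show it is bounded below by the two stated error terms. Writing $\psi:=\frac{1-t^2}{2}u+t^2w$ and recalling from \eqref{eq:action} that $J=\frac12 b_L-\frac12\int V|\cdot|^2-\int F$, I would first dispose of the quadratic part. Since $w\in\cW$ satisfies $\cnabla\times w=0$, we have $b_L(u,u)=b_L(v,v)=\|v\|^2$ and $b_L(u,w)=0$, so $b_L(u,\psi)=\frac{1-t^2}{2}\|v\|^2$. Hence the contribution $\frac12 b_L(u,u)$ from $J(u)$, the contribution $b_L(u,\psi)$ from $J'(u)(\psi)$, and the contribution $\frac12 b_L(tv,tv)=\frac{t^2}{2}\|v\|^2$ from $J(tv)$ cancel exactly, leaving $D(t)=A+B$, where $A$ gathers the $V$-terms and $B$ the terms in $F$ and $f$.

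Next I would treat the potential part $A=-\frac12\int_{\R^2}V|u|^2+\int_{\R^2}V\langle u,\psi\rangle+\frac{t^2}{2}\int_{\R^2}V|v|^2$. Substituting $u=v+w$ and $\psi=\frac{1-t^2}{2}u+t^2w$ and expanding the integrand pointwise, the coefficients of $|v|^2$ and of $\langle v,w\rangle$ vanish identically, and only the $|w|^2$ term survives, giving $A=\frac{t^2}{2}\int_{\R^2}V|w|^2\ge0$ by (V). This is routine algebra.

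The heart of the argument is the nonlinear part. Using $\langle f(u),\psi\rangle=\frac{1-t^2}{2}\langle f(u),u\rangle+t^2\langle f(u),w\rangle$, I would split
$$
B=\underbrace{\int_{\R^2}\Bigl(F(tv)-F(u)+\tfrac{1-t^2}{2}\langle f(u),u\rangle\Bigr)\,dx}_{B_1}+t^2\int_{\R^2}\langle f(u),w\rangle\,dx.
$$
The key claim is that the integrand of $B_1$ is nonnegative pointwise. Indeed $F(tv)\ge0$ by (F0), while (F3) gives $\langle f(u),u\rangle\ge\gamma F(u)$; the restriction $t\in[0,\sqrt{1-2/\gamma}]$ means $1-t^2\ge2/\gamma$, i.e. $\frac{(1-t^2)\gamma}{2}\ge1$, so that, since $F(u)\ge0$, one gets $\frac{1-t^2}{2}\langle f(u),u\rangle\ge\frac{(1-t^2)\gamma}{2}F(u)\ge F(u)$, whence the integrand is $\ge F(tv)\ge0$. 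This is precisely the place where the threshold $\sqrt{1-2/\gamma}$ enters, and the only place the exact range of $t$ is used.

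Finally I would absorb the leftover cross term into the two prescribed errors. Arguing as for \eqref{eq:Fineq} but with $f$ in place of $F$, conditions (F1)--(F2) furnish, for each $\eps>0$, a constant $c_\eps>0$ with $|f(u)|\le\eps|u|+c_\eps\Phi'(|u|)$ (the constant $1$ in (F2) being absorbed into $c_\eps\Phi'(|u|)$ since $\Phi'$ is bounded below away from the origin). Then $\bigl|t^2\int\langle f(u),w\rangle\bigr|\le t^2\eps\int|u||w|+t^2c_\eps\int\Phi'(|u|)|w|$; Cauchy--Schwarz controls the first integral by $|u|_2|w|_2$, while the Hölder inequality in Orlicz spaces controls the second by $2|\Phi'(|u|)|_\Psi|w|_\Phi$, the membership $\Phi'(|u|)\in L^\Psi$ being guaranteed by Lemma \ref{lem:conj} because $u\in L^\Phi$. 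After redefining $c_\eps$, combining $A\ge0$, $B_1\ge0$ and this estimate yields $D(t)\ge-\eps t^2|u|_2|w|_2-c_\eps t^2|\Phi'(|u|)|_\Psi|w|_\Phi$, which is the assertion. I expect no serious obstacle beyond careful bookkeeping: the real content is organizing the cancellations so that the residual potential term is manifestly nonnegative and the residual nonlinear term is exactly the cross term $\langle f(u),w\rangle$ whose modulus matches the two given errors, the single structural input being the pointwise bound $B_1\ge0$ that dictates the admissible range of $t$.
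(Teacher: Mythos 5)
Your proof is correct and follows essentially the same route the paper intends: the paper's "proof" of Lemma \ref{lem:AR} is a citation of the computation in \cite[Lemma 6.3]{MederskiReichel} combined with Lemma \ref{lem:conj}, which is precisely what you carry out — cancellation of the quadratic $b_L$-terms using $\cnabla\times w=0$, reduction of the $V$-terms to $\frac{t^2}{2}\int_{\R^2}V|w|^2\,dx\ge 0$, the pointwise bound $F(tv)-F(u)+\frac{1-t^2}{2}\langle f(u),u\rangle\ge F(tv)\ge 0$ valid exactly for $t\in[0,\sqrt{1-2/\gamma}]$ by (F3), and the estimate of $t^2\int_{\R^2}\langle f(u),w\rangle\,dx$ via (F1)--(F2), Cauchy--Schwarz, the Orlicz--H\"older inequality, and Lemma \ref{lem:conj}. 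No gaps; nothing further is needed.
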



\begin{Lem}\label{lem:v=0}
If $v \in \cV$ is such that $\cnabla \times v = 0$ in the sense of distributions, then $v=0$.
\begin{proof}
In this proof, we shall denote points in $\R^3$ by $(x,y,z)$. Let $v = \begin{pmatrix} U \\ \wt U \end{pmatrix}$ and define $E$ via \eqref{eq:travel_wave} with $t=0$. Observe that $\cnabla \times v = 0$ implies $\nabla \times E = 0$, and $v \in \cV$ implies $\div E = 0$, both in the sense of distributions. From $\nabla \times E = 0$, \cite[Lemma 1.1 (i)]{Leinfelder}, and the fact that $E \in L^2_\textup{loc}(\R^3)$, there exists $\xi \in H^1_\textup{loc}(\R^3)$ such that $E = \nabla \xi$. Then, $0 = \div E = \Delta \xi$, so $E$ is harmonic in $\R^3$ as well. Writing $\Delta E(x,y,z) = 0$ explicitly, we obtain
\[
\begin{split}
\partial_x^2 v(x,y) \cos(kz) & + \partial_x^2 \wt v(x,y) \sin(kz) + \partial_y^2 v(x,y) \cos(kz) + \partial_y^2 \wt v(x,y) \sin(kz)\\
& - k^2 v(x,y) \cos(kz) - k^2 \wt v(x,y) \sin(kz) = 0,
\end{split}
\]
whence, taking $z=0$ and $z=\pi/(2k)$,
\[
-\Delta v + k^2 v = -\Delta \wt v + k^2 \wt v = 0 \quad \text{in } \R^2.
\]
Since $v \in H^1(\R^2)$, we conclude.
\end{proof}
\end{Lem}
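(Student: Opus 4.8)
The plan is to transport the problem from $\R^2$ to $\R^3$ by reinstating the hidden $z$-variable that the operator $\cnabla\times$ secretly encodes. Writing $v=\begin{pmatrix} U \\ \wt U\end{pmatrix}$, I would set $E(x,y,z):=U(x,y)\cos(kz)+\wt U(x,y)\sin(kz)$, i.e. the field \eqref{eq:travel_wave} frozen at $t=0$. A direct computation of the ordinary three-dimensional curl and divergence of $E$, using $\partial_z\cos(kz)=-k\sin(kz)$ and $\partial_z\sin(kz)=k\cos(kz)$, shows that the six components of $\cnabla\times v$ are precisely the coefficients of $\cos(kz)$ and $\sin(kz)$ occurring in the three components of $\nabla\times E$; likewise the two scalar constraints defining $\cV$ are exactly the coefficients of $\cos(kz)$ and $\sin(kz)$ in $\div E$. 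By linear independence of $\cos(kz)$ and $\sin(kz)$, the hypothesis $\cnabla\times v=0$ becomes $\nabla\times E=0$ in $\R^3$, and $v\in\cV$ becomes $\div E=0$, both in the sense of distributions.

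Next I would exploit that $\R^3$ is simply connected. Since $E\in L^2_\textup{loc}(\R^3)$ and $\nabla\times E=0$ distributionally, a de Rham/Poincar\'e-type lemma (in the form of \cite[Lemma 1.1 (i)]{Leinfelder}) produces a scalar potential $\xi\in H^1_\textup{loc}(\R^3)$ with $E=\nabla\xi$. Feeding this into $\div E=0$ gives $\Delta\xi=0$, so $\xi$ is harmonic; by elliptic regularity $\xi$ is smooth and each component $E_j=\partial_j\xi$ is harmonic as well, that is $\Delta E=0$ componentwise in $\R^3$.

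Then I would read off the equation in the $(x,y)$-variables. Because $\Delta E=\big((\partial_x^2+\partial_y^2)U-k^2U\big)\cos(kz)+\big((\partial_x^2+\partial_y^2)\wt U-k^2\wt U\big)\sin(kz)$, the identity $\Delta E=0$, evaluated at $z=0$ and $z=\pi/(2k)$ (legitimate now that $E$ is smooth, or again separated by linear independence), yields $-\Delta U+k^2U=0$ and $-\Delta\wt U+k^2\wt U=0$ in $\R^2$. Finally, since $U,\wt U\in H^1(\R^2)^3$, I would test each equation against itself to obtain $|\nabla U|_2^2+k^2|U|_2^2=0$ and its analogue for $\wt U$; as $k\neq 0$, this forces $U=\wt U=0$, whence $v=0$.

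The only delicate point is the second paragraph: one must invoke the distributional Poincar\'e lemma correctly, since $E$ is a priori only $L^2_\textup{loc}$ and the argument must not covertly assume regularity before harmonicity is established. Everything else—the bookkeeping identifying the two-dimensional and three-dimensional differential conditions, and the concluding positivity argument for $-\Delta+k^2$ on $H^1(\R^2)$—is routine. A purely two-dimensional alternative would diagonalise the Fourier symbol of $L=\cnabla\times\cnabla\times$ and use that $\ker L$ coincides with the $L^2$-closure of the gradients $\cnabla\begin{pmatrix}\phi\\ \tphi\end{pmatrix}$, which is orthogonal to $\cV$; but this essentially re-proves the spectral description of $L$ and is less transparent than the harmonic-extension route.
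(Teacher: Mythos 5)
Your proposal is correct and follows essentially the same route as the paper's own proof: lift $v$ to the field $E$ in $\R^3$ at $t=0$, use \cite[Lemma 1.1 (i)]{Leinfelder} to obtain a potential $\xi \in H^1_\textup{loc}(\R^3)$ with $E = \nabla\xi$, deduce harmonicity of $E$ from $\div E = 0$, separate the $\cos(kz)$ and $\sin(kz)$ coefficients to get $-\Delta U + k^2 U = -\Delta \wt U + k^2 \wt U = 0$ in $\R^2$, and conclude from $U, \wt U \in H^1(\R^2)^3$ and $k \neq 0$. Your added remarks (elliptic regularity for $\xi$, testing the equations against themselves) merely make explicit steps the paper leaves implicit.
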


The next Lemma refines the abstract condition \cite[(I7)]{MeScSz}.

\begin{Lem}\label{lem:supq}
Let $(v_n) \subset \cV$, $v \in \cV \setminus \{0\}$, $(w_n) \subset \cW$, and $(t_n) \subset (0,\infty)$ such that $t_n \to \infty$ and $v_n \weakto v$ as $n \to \infty$. Then
$$
\lim_n \frac{1}{t_n^2} \int_{\R^2} F\bigl(t_n (v_n + w_n)\bigr) \, dx = \infty.
$$
\end{Lem}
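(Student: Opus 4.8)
The plan is to reduce everything to the $N$-function $\Phi$ and then argue by contradiction, exploiting the superquadratic growth (N3) together with the $L^2$-orthogonality of $\cV$ and $\cW$. Write $u_n := v_n + w_n$. Since by (F3) one has $F(u) \ge c_2 \Phi(|u|)$, it suffices to prove that $\frac{1}{t_n^2}\int_{\R^2}\Phi(t_n|u_n|)\,dx \to \infty$. To obtain the full limit it is enough that no subsequence keeps this quantity bounded, so I assume, passing to a subsequence, that $\frac{1}{t_n^2}\int_{\R^2}\Phi(t_n|u_n|)\,dx \le C$ and seek a contradiction.

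First I would show that $u_n \to 0$ in measure. For every $\delta > 0$, monotonicity of $\Phi$ gives $\Phi(t_n\delta)\,|\{|u_n|\ge\delta\}| \le \int_{\R^2}\Phi(t_n|u_n|)\,dx \le C t_n^2$, hence
\[
|\{|u_n|\ge\delta\}| \le \frac{C}{\delta^2}\,\frac{(t_n\delta)^2}{\Phi(t_n\delta)} \longrightarrow 0
\]
by (N3), since $t_n\delta\to\infty$. Consequently, up to a further subsequence, $u_n\to 0$ almost everywhere. Next I would split $u_n = \ell_n + s_n$ with $\ell_n := u_n\mathbf 1_{\{|u_n|\ge 1/t_n\}}$ and $s_n := u_n\mathbf 1_{\{|u_n|< 1/t_n\}}$. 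On the support of $\ell_n$ one has $t_n|u_n|\ge 1$, so the lower bound $\Phi(\tau)\gtrsim \tau^2$ for $\tau\ge 1$ yields $t_n^2|u_n|^2 \lesssim \Phi(t_n|u_n|)$ there, whence $\int_{\R^2}|\ell_n|^2\,dx \lesssim \frac{1}{t_n^2}\int_{\R^2}\Phi(t_n|u_n|)\,dx \le C'$; thus $(\ell_n)$ is bounded in $L^2(\R^2)^6$. Since moreover $\ell_n\to 0$ a.e.\ (because $|\ell_n|\le|u_n|\to 0$ a.e.), the standard fact that an $L^2$-bounded, a.e.\ convergent sequence converges weakly to its pointwise limit gives $\ell_n \weakto 0$ in $L^2(\R^2)^6$. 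The remaining piece satisfies the crude but decisive bound $|s_n|_\infty \le 1/t_n \to 0$.

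The contradiction then comes from testing against a fixed, well-chosen element of $\cV$. Since $v\neq 0$ and $\cC_0^\infty(\R^2)^6\cap\cV$ is dense in $\cV$ with respect to $|\cdot|_2$ (a standard approximation for such divergence-type constrained spaces), I may pick $\psi\in\cC_0^\infty(\R^2)^6\cap\cV$ with $\langle v,\psi\rangle_2\neq 0$. Because $w_n\in\cW$ and $\cV\perp\cW$ in $L^2$ (Theorem \ref{th:Helmholtz}), we have $\langle w_n,\psi\rangle_2=0$, so
\[
\langle v_n,\psi\rangle_2 = \langle u_n,\psi\rangle_2 = \langle \ell_n,\psi\rangle_2 + \langle s_n,\psi\rangle_2 .
\]
On the right, $\langle \ell_n,\psi\rangle_2\to 0$ by the weak $L^2$-convergence of $\ell_n$, while $|\langle s_n,\psi\rangle_2|\le |s_n|_\infty\,|\psi|_1 \le |\psi|_1/t_n\to 0$ since $\psi$ has compact support; hence $\langle v_n,\psi\rangle_2\to 0$. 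On the left, $v_n\weakto v$ in $H^1(\R^2)^6$ implies $v_n\weakto v$ in $L^2(\R^2)^6$, so $\langle v_n,\psi\rangle_2\to\langle v,\psi\rangle_2\neq 0$, a contradiction.

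The main obstacle is precisely that no bound whatsoever is assumed on $(w_n)$, so $u_n$ may concentrate and need not be bounded in $L^2$; this is what blocks a direct use of the Helmholtz splitting $\cl_{L^\Phi}\cV\cap\cl_{L^\Phi}\cW=\{0\}$, which would require a strong (or at least weak) $L^\Phi$-convergence of $w_n$ that concentration destroys. The device circumventing this is the truncation at height $1/t_n$: the high part is tamed in $L^2$ by the quadratic lower bound on $\Phi$ at infinity and converges weakly to $0$, while the low part is uniformly small in $L^\infty$, so pairing with a compactly supported $\psi\in\cV$ annihilates both parts and contradicts the orthogonality-forced identity $\langle v_n,\psi\rangle_2=\langle u_n,\psi\rangle_2$. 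The one slightly delicate auxiliary point is the density of $\cC_0^\infty(\R^2)^6\cap\cV$ in $\cV$, needed to produce a compactly supported test field not orthogonal to $v$.
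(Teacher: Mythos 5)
Your strategy is correct, and it takes a genuinely different route from the paper's. Both proofs reduce to $\Phi$ via (F3), and both show by the same Chebyshev-plus-(N3) computation that boundedness of $t_n^{-2}\int_{\R^2}\Phi(t_n|u_n|)\,dx$ (where $u_n:=v_n+w_n$) forces $u_n\to 0$ in measure. From that point the paper argues via a dichotomy on local $L^2$-boundedness and, in the bounded case, deduces $w_n\weakto -v$ in $L^2_{\textup{loc}}$, so that $v$ inherits the distributional relation $\cnabla\times v=0$ from $\cW$, and then invokes the rigidity Lemma \ref{lem:v=0} (whose proof lifts to $\R^3$, uses Leinfelder's lemma and harmonicity) to get $v=0$. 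You instead truncate $u_n$ at height $1/t_n$ and test against a fixed element of $\cV$: the high part is \emph{globally} bounded in $L^2$ (by $\inf_{\tau\ge 1}\Phi(\tau)/\tau^2>0$, a correct quantitative form of what the paper states somewhat loosely) and tends to $0$ a.e., hence weakly; the low part is $O(1/t_n)$ in $L^\infty$; and the $L^2$-orthogonality $\cV\perp\cW$ of Theorem \ref{th:Helmholtz} gives $\langle v_n,\psi\rangle_2=\langle u_n,\psi\rangle_2\to 0$, contradicting $\langle v,\psi\rangle_2\ne 0$. This is more elementary: it avoids Lemma \ref{lem:v=0} altogether and needs no case distinction on whether $w_n$ concentrates.

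The one genuine gap is the existence of the test field. You pick $\psi\in\cC_0^\infty(\R^2)^6\cap\cV$ with $\langle v,\psi\rangle_2\ne 0$, justified by the claim that $\cC_0^\infty(\R^2)^6\cap\cV$ is $L^2$-dense in $\cV$, which you call ``standard''. It is not off-the-shelf: the constraint defining $\cV$ is not the classical divergence (it contains the zeroth-order terms $k\tu_3$ and $-ku_3$), so textbook density results for solenoidal fields do not apply verbatim, and the obvious repair --- projecting a compactly supported approximation back onto the constraint --- destroys compact support. The claim is in fact true, but it needs an argument, and none is given. Fortunately, your proof never needs compact support, only $\psi\in\cV\cap L^1(\R^2)^6$, since the pairing with $s_n$ is estimated by $|s_n|_\infty|\psi|_1$. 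Such a $\psi$ can be extracted from the paper itself: choose $\vp\in\cC_0^\infty(\R^2)^6$ with $\langle v,\vp\rangle_2\ne 0$ and let $\psi:=\vp_\cV=\vp-\cnabla\begin{pmatrix}\alpha\\ \talpha\end{pmatrix}$ be its $\cV$-component in the Helmholtz decomposition of Theorem \ref{th:Helmholtz}; then $\langle v,\psi\rangle_2=\langle v,\vp\rangle_2\ne 0$ because $v\perp\cW$ in $L^2$, and $\psi\in L^1$ because $\alpha,\talpha$ are Bessel potentials $(-\Delta+k^2)^{-1}$ of compactly supported smooth data, hence decay exponentially together with their first derivatives. (Alternatively, the $L^2$-projection onto the constraint is a Fourier multiplier that is smooth and has bounded derivatives because $|\xi|^2+k^2\ge k^2>0$; it therefore preserves the Schwartz class, so Schwartz fields in $\cV$, which lie in $L^1$, are $L^2$-dense in $\cV$.) With this replacement, your argument is complete.
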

\begin{proof}
Observe that (F3) implies
\[
\frac{1}{t_n^2} \int_{\R^2} F\bigl(t_n (v_n + w_n)\bigr) \, dx \ge \frac{c_2}{t_n^2} \int_{\R^2} \Phi\bigl(t_n |v_n + w_n|\bigr) \, dx.
\]
From (N3), there exist $C,D>0$ such that for every $t \in \R$, $C \Phi(t) \ge t^2 - D$. For all $R>0$,
\[
\int_{B(0,R)} |v_n + w_n|^2 \, dx \le \frac{C}{t_n^2} \int_{\R^2} \Phi\bigl(t_n |v_n + w_n|\bigr) \, dx + \pi R^2 D,
\]
thus the statement is true if $|v_n + w_n|$ is unbounded in $L^2(B(0,R))$ for some $R>0$.\\
Now, assume that $|v_n + w_n|$ is bounded in $L^2(B(0,R))$ for all $R>0$. Up to a subsequence, $v_n(x) \to v(x)$ for a.e. $x \in \R^2$ and $w_n \to w$ in $L^2_\textup{loc}(\R^2)$ for some $w$ as $n \to \infty$. For every $\eps > 0$ and every $n$, we define
\[
\Omega_n^\eps := \left\{x \in \R^2 : |v_n(x) + w_n(x)| \ge \eps\right\}.
\]
Let us assume by contradiction that $\lim_n |\Omega_n^\eps| = 0$ for all $\eps > 0$, i.e. $\lim_n |v_n + w_n| = 0$ in measure. Then, up to a subsequence, $v_n(x) + w_n(x) \to 0$ for a.e. $x \in \R^2$ as $n \to \infty$. Consequently, $w_n \to -v$ a.e. in $\R^2$ and, therefore, $w_n \weakto -v$ in $L^2_\textup{loc}(\R^2)$ as $n \to \infty$. Since $\cnabla \times w_n = 0$ for every $n$ in the sense of distributions, this implies that the same holds for $v$. From Lemma \ref{lem:v=0}, $v=0$, a contradiction. Finally, taking $\eps>0$ such that $\lim_n |\Omega_n^\eps| > 0$ along a subsequence, we get
\[
\lim_n \int_{\R^2} \frac{\Phi\bigl(t_n |v_n + w_n|\bigr)}{t_n^2} \, dx \ge \lim_n \int_{\Om_n^\eps} \frac{\Phi\bigl(t_n |v_n + w_n|\bigr)}{t_n^2 |v_n + w_n|} |v_n + w_n| \, dx = \infty.\qedhere
\]
\end{proof}

\begin{Lem}\label{lem:neg}
If $X \subset \cV$ has a finite dimension and $(v_n) \subset X$ is such that $\lim_n \|v_n\| = \infty$, then $\lim_n \widetilde{J}(v_n) = -\infty$.
\end{Lem}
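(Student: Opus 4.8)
The plan is to reduce $\widetilde J$ to an explicit expression and then invoke the super-quadratic behaviour of $F$ quantified in Lemma \ref{lem:supq}. First I would record a convenient form of the functional: since $\cnabla\times w=0$ for every $w\in\cW$, for any $v\in\cV$ and $w\in\cW$ one has $b_L(v+w,v+w)=\int_{\R^2}|\cnabla\times v|^2\,dx=\|v\|^2$; hence, recalling the definition of $\widetilde J$,
\[
\widetilde J(v)=\frac12\|v\|^2-\frac12\int_{\R^2}V(x)|v+w(v)|^2\,dx-\int_{\R^2}F\bigl(v+w(v)\bigr)\,dx .
\]
I would then argue by contradiction: if the claim fails, there are a subsequence (not relabelled) and $M>0$ with $\widetilde J(v_n)\ge -M$ for all $n$. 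Writing $t_n:=\|v_n\|\to\infty$ and $\widehat v_n:=v_n/t_n$, the vectors $\widehat v_n$ lie on the unit sphere of the finite-dimensional space $X$; by compactness, up to a further subsequence $\widehat v_n\to\widehat v$ in $X\subset\cV$ with $\|\widehat v\|=1$, so in particular $\widehat v\neq 0$.

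Next I would set $w_n:=w(v_n)\in\cW$ and rescale, putting $\widetilde w_n:=w_n/t_n\in\cW$, so that $v_n+w_n=t_n(\widehat v_n+\widetilde w_n)$. Dividing the displayed identity by $t_n^2$ gives
\[
\frac{\widetilde J(v_n)}{t_n^2}=\frac12\|\widehat v_n\|^2-\frac12\int_{\R^2}V(x)\,|\widehat v_n+\widetilde w_n|^2\,dx-\frac1{t_n^2}\int_{\R^2}F\bigl(t_n(\widehat v_n+\widetilde w_n)\bigr)\,dx .
\]
Since $\widehat v_n\to\widehat v\neq0$ (hence $\widehat v_n\weakto\widehat v$), $t_n\to\infty$, and $(\widetilde w_n)\subset\cW$, Lemma \ref{lem:supq} applies with $\widehat v_n,\widehat v,\widetilde w_n,t_n$ playing the roles of $v_n,v,w_n,t_n$, and yields
\[
\frac1{t_n^2}\int_{\R^2}F\bigl(t_n(\widehat v_n+\widetilde w_n)\bigr)\,dx\longrightarrow\infty .
\]

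Finally, because $V\ge 0$ by (V) the middle term above is nonpositive, while $\tfrac12\|\widehat v_n\|^2=\tfrac12$ is bounded; therefore $\widetilde J(v_n)/t_n^2\to-\infty$, and since $t_n^2\to\infty$ this forces $\widetilde J(v_n)\to-\infty$, contradicting $\widetilde J(v_n)\ge -M$ and proving the claim. I expect the only genuinely delicate point to be the preparation that makes Lemma \ref{lem:supq} applicable: one must rescale the minimizer by $w_n\mapsto w_n/t_n$ and use finite-dimensionality of $X$ to extract, along a subsequence, a nonzero (here even strong) limit $\widehat v$ of the normalized sequence $v_n/t_n$. Everything else—the identity for $b_L$ on $\cV\oplus\cW$ and the sign of the potential term—is routine.
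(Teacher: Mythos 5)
Your proof is correct and follows essentially the same route as the paper: normalize $\widehat v_n = v_n/\|v_n\|$, use finite-dimensionality to extract a nonzero (strong, hence weak) limit, apply Lemma \ref{lem:supq}, and drop the potential term using $V\ge 0$. The only difference is cosmetic: you make explicit the rescaling $\widetilde w_n = w(v_n)/t_n$ needed to put the argument of $F$ in the form $t_n(\widehat v_n+\widetilde w_n)$ required by Lemma \ref{lem:supq} (a step the paper leaves implicit), and you wrap the subsequence extraction in a contradiction argument rather than computing directly.
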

\begin{proof}
Let us denote $\tv_n := v_n / \|v_n\|$. Up to a subsequence, there exists $\tv \in X$ such that $\lim_n \tv = v$. In particular, $\tv \ne 0$. Using Lemma \ref{lem:supq}, we obtain
\[
\lim_n \frac{\widetilde{J}(v_n)}{\|v_n\|^2} \le \frac12 - \frac{1}{\|v_n\|^2} \int_{\R^2} F\bigl(\|v_n\| \tv_n + w(v_n)\bigr) \, dx = -\infty.\qedhere
\]
\end{proof}

Normally, the Ambrosetti--Rabinowitz-type condition (F3) (cf. \cite{AR}) provides the boundedness of Palais--Smale sequences by simply estimating $\wt J(v_n)-\frac{1}{\gamma}\wt J'(v_n)(v_n)$ . As we shall see below, in our problem obtaining the boundedness is much  more complicated even if $F(u)=\frac1p|u|^p$, and achieved only for Cerami sequences (cf. \cite{Cerami}).

\begin{Lem}\label{lem:bbd}
If $(v_n) \subset \cV$ is a Cerami sequence for $\widetilde J$ such that $\liminf_n \widetilde{J}(v_n) \ge 0$, then it is bounded.
\end{Lem}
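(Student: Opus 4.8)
I would argue by contradiction, assuming that $\|v_n\|\to\infty$ along a subsequence, and write $u_n:=v_n+w(v_n)$, so that $\widetilde{J}(v_n)=J(u_n)$. Since $w(v_n)$ minimises the convex functional defining it, we have $J'(u_n)|_{\cW}=0$, while the identity $\widetilde{J}'(v)(\vp)=J'(v+w(v))(\vp)$ for $\vp\in\cV$ together with the Cerami condition yields $J'(u_n)(v_n)=\widetilde{J}'(v_n)(v_n)\to0$. The first thing I would extract are a priori bounds. From the boundedness of $(\widetilde{J}(v_n))$, the nonnegativity of $V$ and $F$, and
\[
\widetilde{J}(v_n)=\tfrac12\|v_n\|^2-\tfrac12\int_{\R^2}V|u_n|^2\,dx-\int_{\R^2}F(u_n)\,dx,
\]
I obtain $\int_{\R^2}F(u_n)\,dx\le\tfrac12\|v_n\|^2+C$ and $\int_{\R^2}V|u_n|^2\,dx\lesssim\|v_n\|^2$; since $V$ is bounded away from $0$ this gives $|u_n|_2\lesssim\|v_n\|$, hence (by the $L^2$-orthogonality of $\cV$ and $\cW$) $|w(v_n)|_2\lesssim\|v_n\|$, and by (F3) also $\int_{\R^2}\Phi(|u_n|)\,dx\lesssim\|v_n\|^2$.

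Next I would normalise, setting $\tv_n:=v_n/\|v_n\|$, so that $\|\tv_n\|=1$ and, up to a subsequence, $\tv_n\weakto\tv$ in $\cV$; because of the radial symmetry the embedding into $L^q(\R^2)$ is compact for every $q>2$, so $\tv_n\to\tv$ in each such $L^q$. If $\tv\neq0$, I would apply Lemma \ref{lem:supq} with $t_n:=\|v_n\|\to\infty$ and $w_n:=w(v_n)/\|v_n\|$, noting that $t_n(\tv_n+w_n)=u_n$; the conclusion $\frac{1}{\|v_n\|^2}\int_{\R^2}F(u_n)\,dx\to\infty$ then contradicts the bound $\int_{\R^2}F(u_n)\,dx\lesssim\|v_n\|^2$ from the previous step. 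This disposes of the non-vanishing case.

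It remains to treat $\tv=0$, and this is where Lemma \ref{lem:AR} enters, applied along the ray with $t=t_n:=s/\|v_n\|$ for an arbitrary fixed $s>0$ (so $t_n\in[0,\sqrt{1-2/\gamma}]$ for $n$ large), $u=u_n$, and $w=w(v_n)$. Using $\tfrac{1-t_n^2}{2}u_n+t_n^2w(v_n)=\tfrac{1-t_n^2}{2}v_n+\tfrac{1+t_n^2}{2}w(v_n)$ and $J'(u_n)|_{\cW}=0$, the gradient term reduces to $\tfrac{1-t_n^2}{2}J'(u_n)(v_n)=o(1)$. For the leading term,
\[
J(s\tv_n)=\frac{s^2}{2}\Bigl(1-\int_{\R^2}V|\tv_n|^2\,dx\Bigr)-\int_{\R^2}F(s\tv_n)\,dx\ge\frac{s^2}{2}\delta_0-\int_{\R^2}F(s\tv_n)\,dx,
\]
with $\delta_0:=1-\esssup V/k^2>0$ by (V); moreover $F(u)\le\eps|u|^2+C_\eps|u|^p$ (from (F1)--(F2) and $\Phi(t)\lesssim t^2+t^p$) combined with $\tv_n\to0$ in $L^p$ forces $\int_{\R^2}F(s\tv_n)\,dx\to0$ for fixed $s$. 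Putting these together, Lemma \ref{lem:AR} gives
\[
\widetilde{J}(v_n)\ge\frac{s^2}{2}\delta_0-o_n(1)-\eps s^2\,\frac{|u_n|_2\,|w(v_n)|_2}{\|v_n\|^2}-c_\eps s^2\,\frac{\bigl|\Phi'(|u_n|)\bigr|_\Psi\,|w(v_n)|_\Phi}{\|v_n\|^2}.
\]
The first error factor is uniformly bounded by the a priori estimates, so it contributes at most $\eps s^2C$ and is harmless once $\eps$ is chosen small; if the last term can be shown negligible, then $\limsup_n\widetilde{J}(v_n)\ge\frac{s^2}{2}\delta_0$ for every $s>0$, contradicting the boundedness of $(\widetilde{J}(v_n))$ and finishing the proof.

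I expect the control of that last term to be the main obstacle. Estimating $|\Phi'(|u_n|)|_\Psi$ and $|w(v_n)|_\Phi$ separately is too lossy, because $w(v_n)=u_n-v_n$ may be as large as $v_n$ in $L^\Phi$ even though $u_n$ is small there, so the crude product need not be $o(\|v_n\|^2)$. The route I would pursue is to note that the quantity genuinely at stake is the pairing $\int_{\R^2}\langle f(u_n),w(v_n)\rangle\,dx$, which the Euler--Lagrange equation for $w(v_n)$ identifies with $-\int_{\R^2}V\langle u_n,w(v_n)\rangle\,dx$ and hence controls by $\esssup V\,|u_n|_2\,|w(v_n)|_2\lesssim\|v_n\|^2$; incorporating this cancellation into the derivation of Lemma \ref{lem:AR}, rather than invoking the lemma as a black box, is what I expect to render the error negligible against $\frac{s^2}{2}\delta_0$. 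This is precisely the point at which the Ambrosetti--Rabinowitz condition (F3) must be combined with the \emph{Cerami} condition rather than the weaker Palais--Smale one: boundedness is forced not by a norm bound on the $\cW$-component but by the cancellation encoded in the equation satisfied by $w(v_n)$, in the regime $\tv=0$ where $\tv_n$ carries no mass in the relevant superquadratic norm.
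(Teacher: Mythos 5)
Your skeleton (contradiction, normalisation, the dichotomy on $\tv$, Lemma \ref{lem:supq} in the non-vanishing case, Lemma \ref{lem:AR} in the vanishing case) matches the paper's, and your non-vanishing case is correct; but the vanishing case is left open exactly at the decisive point, and the repair you sketch does not close it. The Euler--Lagrange identity $\int_{\R^2}\langle f(u_n),w_n\rangle\,dx=-\int_{\R^2}V\langle u_n,w_n\rangle\,dx$ is true, but it only bounds the offending quantity by $\esssup V\,|u_n|_2|w_n|_2\le C\|v_n\|^2$ with a \emph{fixed} constant $C$ (your own a priori bounds give $C\approx\esssup V/\essinf V\ge 1$); after multiplication by $t_n^2=s^2/\|v_n\|^2$ this is an error of order $Cs^2$, the same order as the main term $\tfrac{s^2}{2}\delta_0=\tfrac{s^2}{2}\bigl(1-\esssup V/k^2\bigr)$ and in general larger, so no choice of $s$ and $\eps$ yields a contradiction. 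The missing idea --- the heart of the paper's proof and the precise place where the Cerami condition is exploited --- is to apply the Ambrosetti--Rabinowitz trick at $u_n$ itself: since $J'(u_n)|_{\cW}=0$ and $J'(u_n)(v_n)=\widetilde{J}'(v_n)(v_n)\to 0$ (Cerami), one has $J'(u_n)(u_n)\to 0$, and then (F3) gives
\[
\frac{\gamma-2}{2}\int_{\R^2}F(u_n)\,dx\ \le\ J(u_n)-\frac12 J'(u_n)(u_n)\ \le\ C.
\]
The quadratic terms cancel here, so this gives no bound on $\|v_n\|$, but it bounds $\int_{\R^2}\Phi(|u_n|)\,dx$ by a \emph{constant}, not merely by $O(\|v_n\|^2)$ as in your first step. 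By Lemmas \ref{lem:CvBdd} and \ref{lem:conj} and the continuity of the $L^\Phi$-projection onto $\cW$ coming from Theorem \ref{th:Helmholtz}, the quantities $\bigl|\Phi'(|u_n|)\bigr|_\Psi$ and $|w_n|_\Phi$ are then bounded, so the term you could not control equals $c_\eps s^2\,O(1/\|v_n\|^2)\to 0$, and Lemma \ref{lem:AR} can in fact be used as a black box; with this step inserted, your argument closes.

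A second, independent error: the claim that ``because of the radial symmetry the embedding into $L^q(\R^2)$ is compact'' is false in the setting of this lemma. The statement concerns arbitrary Cerami sequences in $\cV$, whose elements are not $\SO(2)$-equivariant (only the potential $V$ is radial); $\cV\hookrightarrow L^q(\R^2)^6$ is not compact, and the compact embedding is established and used only later, for $\cV_{\SO(2)}$, in Lemma \ref{lem:Cer}. Your treatment of the case $\tv=0$ needs $\tv_n\to 0$ in $L^p$, which does not follow from $\tv_n\weakto 0$ alone (translating bumps are the obstruction). The paper avoids compactness altogether by running the dichotomy on the Orlicz norm rather than on the weak limit: either $|\tv_n|_\Phi\to 0$, in which case \eqref{eq:Fineq} and the $\Delta_2$-condition give $\int_{\R^2}F(s\tv_n)\,dx\to 0$ directly (cf. \eqref{eq:Lions}), or $\liminf_n|\tv_n|_\Phi>0$, in which case a nontrivial weak limit is extracted and Lemma \ref{lem:supq} applies. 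Replacing your pointwise dichotomy by this one removes any need for compactness and keeps the lemma valid on all of $\cV$, which is how it is stated.
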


We recall that a Cerami sequence for $\widetilde{J}$ is a sequence $(v_n) \subset \cV$ such that $\bigl(\widetilde{J}(v_n)\bigr)$ is bounded and $\lim_n (1 + \|v_n\|) \widetilde{J}'(v_n) = 0$, and that a Cerami sequence at the level $c \in \R$ is a Cerami sequence such that $\lim_n \widetilde{J}(v_n) = c$.

\begin{proof}[Proof of Lemma \ref{lem:bbd}]
Let $\beta \ge \limsup_n \widetilde{J}(v_n)$.
For every $n$, let us denote $w_n := w(v_n)$ and $u_n := v_n + w_n$. From the second inequality in (F3),
\[
\frac12 \|v_n\|^2 - \frac12 \essinf V |v_n|_2^2 \ge J(u_n) + \frac12 \essinf V |w_n|_2^2 + c_2 \int_{\R^2} \Phi(|u_n|) \, dx.
\]
Adding $\|v_n\|^2 / 2 - \essinf V |v_n|_2^2 / 2$ on both sides, we obtain
\begin{multline}\label{eq:Jeps}
\|v_n\|^2 - \essinf V |v_n|_2^2\\
\ge J(u_n) + \frac12 \|v_n\|^2 - \frac12 \essinf V |v_n|_2^2 + \frac12 \essinf V |w_n|_2^2 + c_2 \int_{\R^2} \Phi(|u_n|) \, dx.
\end{multline}
Set
\[
s_n^2 := \frac12 \|v_n\|^2 - \frac12 \essinf V |v_n|_2^2 + \frac12 \essinf V |w_n|_2^2 + c_2 \int_{\R^2} \Phi(|u_n|) \, dx
\]
and $\tv_n := v_n / s_n$. 
Assume by contradiction that $(u_n)$ is unbounded. From Theorem \ref{th:Helmholtz}, this implies that $\lim_n s_n = \infty$ along a subsequence. We want to prove that
\begin{equation}\label{eq:vnot0}
\liminf_n |\tv_n|_\Phi > 0,
\end{equation}
thus let us assume by contradiction that $|\tv_n|_\Phi \to 0$ along a subsequence as $n \to \infty$. This, together with (F1) and (F2), implies that for every $s > 0$
\begin{equation}\label{eq:Lions}
\lim_n \int_{\R^2} F(s \tv_n) \, dx = 0.
\end{equation}
From (F3), there holds
\[
J(u_n) - \frac12 J'(u_n)(u_n) \ge \frac{\gamma-2}{2} \int_{\R^2} F(u_n) \, dx,
\]
thus $(u_n)$ is bounded in $L^\Phi$. From Theorem \ref{th:Helmholtz}, $(w_n)$ is bounded in $L^\Phi$. Moreover, there exists $\delta > 0$ such that $s_n \ge \delta |u_n|_2^2 \ge \delta |w_n|_2^2$. From Lemma \ref{lem:AR} with $u = u_n$ and $t = s/s_n$ (observe that $t \le \sqrt{1-2/\gamma}$ for $n \gg 1$) and Lemma \ref{lem:conj},
for every $\eps>0$ there exists $c_\eps>0$ such that
\begin{equation*}
\begin{split}
\beta & \ge \liminf_n J(u_n)\\
& \ge \liminf_n J(s \tv_n) + \liminf_n J'(u_n) \biggl(\frac{1-(s/s_n)^2}{2} u_n + (s/s_n)^2 w_n\biggr)\\
& \quad - \eps \limsup_n (s/s_n)^2 |u_n|_2 |w_n|_2 - c_\eps \limsup_n (s/s_n)^2 \big|\Phi'(|u_n|)\big|_\Psi |w_n|_\Phi\\
& \ge \liminf_n J(s \tv_n) + \liminf_n J'(u_n) \biggl(\frac{1-(s/s_n)^2}{2} u_n + (s/s_n)^2 w_n\biggr) - s^2 \frac{\varepsilon}{\delta}.
\end{split}
\end{equation*}
Observe that the minimality of $w_n$ implies $J'(u_n)(w_n) = 0$, hence, since $\lim_n J'(u_n)(u_n) = 0$, we obtain
\begin{equation*}
\beta \ge \liminf_n J(u_n) - s^2 \frac{\varepsilon}{\delta},
\end{equation*}
which, together with \eqref{eq:Lions}, yields
\begin{equation*}
\beta \ge s^2 \biggl(\liminf_n\bigl(\|\tv_n\|^2 - \esssup V |\tv_n|_2^2\bigr) - \frac{\eps}{\delta^2}\biggr).
\end{equation*}
Taking $\sigma \in (0,1)$ such that
\begin{equation*}
\esssup V \le \frac\sigma2 \essinf V + (1-\sigma) k^2,
\end{equation*}
we obtain, for every $n$,
\begin{equation*}
\begin{split}
\|\tv_n\|^2 - \esssup V |\tv_n|_2^2 & \ge \|\tv_n\|^2 - (1-\sigma) k^2 |\tv_n|_2^2 - \frac\sigma2 \essinf V |\tv_n|_2^2\\
& \ge \sigma \biggl(\|\tv_n\|^2 - \frac12 \essinf V |\tv_n|_2^2\biggr) \ge \sigma + o_n(1),
\end{split}
\end{equation*}
where the last inequality comes from \eqref{eq:Jeps} and $J(u_n) \ge o_n(1)$. In conclusion, taking
$s = 2\sqrt{\beta / \sigma}$ and $\eps = \sigma \delta^2 / 8$, we get the contradiction
\[
\beta \ge s^2 \biggl(\frac{\sigma}{2} - \frac{\eps}{\delta^2}\biggr) = \frac32 \beta.
\]
Using \eqref{eq:vnot0} and the fact that $(\tv_n)$ is bounded, we find $\tv \in \cV \setminus \{0\}$ such that $\tv_n \weakto \tv$ as $n \to \infty$ along a subsequence. Then, from Lemma \ref{lem:supq},
\[
0 = \lim_n \frac{J(u_n)}{s_n^2} \le \lim_n \frac12 \|\tv_n\|^2 - \frac{1}{s_n^2} \int_{\R^2} F(s_n \tv_n + w_n) \, dx = -\infty,
\]
a contradiction.
\end{proof}

Now we introduce the cylindrical symmetry of the problem. Let 
$$g=\begin{pmatrix} \cos \alpha & -\sin  \alpha\\
\sin  \alpha &  \cos \alpha\end{pmatrix} \in\SO(2)$$ with $\alpha \in\R$, and let $U\colon\R^2\to\R^3$. We define 
$$(g\star U)(x) := \begin{pmatrix} g & 0\\
0 & 1\end{pmatrix} U(g^{-1} x)
=\begin{pmatrix}  U_1 (g^{-1}x) \cos\alpha - U_2 (g^{-1}x) \sin\alpha\\
U_1 (g^{-1}x) \sin\alpha + U_2 (g^{-1}x) \cos\alpha\\
U_3 (g^{-1}x)\end{pmatrix}$$ 
for $x\in\R^2$, where $g^{-1}x=(x_1\cos \alpha + x_2\sin  \alpha,-x_1\sin  \alpha +x_2\cos \alpha)$.

\begin{Lem} 
	There holds
	$$\Big|\cnabla\times \begin{pmatrix} (g\star U)(x)\\ (g\star \wt U)(x) 
	\end{pmatrix}\Big| =\Big|\cnabla\times \begin{pmatrix} U (g^{-1}x)\\ \wt U(g^{-1}x) 
	\end{pmatrix} \Big|$$
	for  $U,\wt U\colon\R^2\to\R^3$ and $g\in \SO(2)$.
\end{Lem}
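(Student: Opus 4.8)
The plan is to reduce the claim to the rotational equivariance of the ordinary curl on $\R^3$, exploiting the same correspondence already used in the proof of Lemma \ref{lem:v=0}: the operator $\cnabla\times$ is just the curl $\nabla\times$ read off on the profiles of a travelling wave. Concretely, given $U,\wt U\colon\R^2\to\R^3$ I would set, for $x\in\R^2$ and $z\in\R$,
\[
E(x,z):=U(x)\cos(kz)+\wt U(x)\sin(kz),
\]
and check by differentiating in $z$ that $\nabla\times E$ is again of travelling-wave form, with cosine profile $P$ and sine profile $\wt P$ satisfying $(P,\wt P)^T=\cnabla\times(U,\wt U)^T$; matching the six entries (including the $k$-terms that couple $U$ and $\wt U$) confirms this. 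In particular $|\cnabla\times(U,\wt U)^T(x)|^2=|P(x)|^2+|\wt P(x)|^2$.

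Next I would encode the $\star$-action geometrically. Writing $R_g:=\begin{pmatrix} g & 0\\ 0 & 1\end{pmatrix}\in\SO(3)$, a rotation about the $z$-axis with $\det R_g=1$ since $g\in\SO(2)$, one has $R_g^{-1}(x,z)=(g^{-1}x,z)$, so the field $F(X):=R_g\,E(R_g^{-1}X)$ is again a travelling wave, now with profiles $(g\star U,\,g\star\wt U)$ by the very definition of the $\star$-action. Hence $\nabla\times F$ has profiles $\cnabla\times(g\star U,g\star\wt U)^T$.

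The core step is the equivariance of the curl under proper rotations: for $R\in\SO(3)$ and $F(X)=R\,E(R^{-1}X)$ one has $(\nabla\times F)(X)=\det(R)\,R\,(\nabla\times E)(R^{-1}X)$, the determinant factor being $1$ here. Applying this with $R=R_g$ and reading off profiles, the cosine and sine profiles of $\nabla\times F$ are $(g\star P,\,g\star\wt P)$. Comparing the two descriptions of the profiles of $\nabla\times F$ yields the pointwise identity
\[
\cnabla\times\begin{pmatrix} g\star U\\ g\star\wt U\end{pmatrix}(x)=\begin{pmatrix}(g\star P)(x)\\ (g\star\wt P)(x)\end{pmatrix}=\begin{pmatrix} R_g\,P(g^{-1}x)\\ R_g\,\wt P(g^{-1}x)\end{pmatrix}.
\]
Taking Euclidean norms and using that $R_g$ is orthogonal, so $|R_g\,P(g^{-1}x)|=|P(g^{-1}x)|$ and likewise for $\wt P$, gives $|\cnabla\times(g\star U,g\star\wt U)^T(x)|^2=|P(g^{-1}x)|^2+|\wt P(g^{-1}x)|^2=|\cnabla\times(U,\wt U)^T(g^{-1}x)|^2$, which is exactly the assertion.

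The main obstacle I anticipate is bookkeeping rather than depth: one must verify cleanly that $\cnabla\times$ is the profile-level curl and that the two rotations entering the $\star$-action, namely the rotation $g^{-1}$ of the planar argument and the rotation $R_g$ of the target, combine consistently with $\partial_x,\partial_y$ through the chain rule. Should one prefer to bypass the $\R^3$ detour, the same identity follows from a direct chain-rule computation of $\cnabla\times(g\star U,g\star\wt U)^T$, using that $g$ intertwines the planar gradient (i.e. $\nabla(h\circ g^{-1})=g\,(\nabla h)\circ g^{-1}$) and commutes with the block structure of $\cnabla\times$; this is elementary but considerably more tedious, which is why I would favour the equivariance route.
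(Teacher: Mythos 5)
Your proof is correct, but it follows a genuinely different route from the paper's. The paper argues by brute-force computation: it expands all six entries of $\cnabla\times\begin{pmatrix} g\star U\\ g\star \wt U\end{pmatrix}$ via the chain rule and observes that the resulting vector is obtained from $\cnabla\times\begin{pmatrix} U\\ \wt U\end{pmatrix}(g^{-1}x)$ by rotating the entry pairs $(1,2)$ and $(4,5)$ by $g$ while leaving entries $3$ and $6$ unchanged, whence the norms coincide; this is exactly the ``tedious'' chain-rule fallback you sketch in your last paragraph. Your primary route instead lifts the problem to $\R^3$: you verify that $\cnabla\times$ is the profile-level curl, i.e. that for $E(x,z)=U(x)\cos(kz)+\wt U(x)\sin(kz)$ the field $\nabla\times E$ is again a travelling wave whose cosine and sine profiles $P,\wt P$ are the first and last three components of $\cnabla\times (U,\wt U)^T$ (a correct identification, consistent with how $L=\cnabla\times\cnabla\times$ arises from the ansatz \eqref{eq:travel_wave} in \eqref{newe}); you then encode the $\star$-action as conjugation by the rotation $R_g=\begin{pmatrix} g&0\\ 0&1\end{pmatrix}\in\SO(3)$ about the propagation axis and invoke the classical equivariance $(\nabla\times F)(X)=\det(R)\,R\,(\nabla\times E)(R^{-1}X)$ for $F=R\,E(R^{-1}\cdot)$. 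The one point you leave implicit is the uniqueness of the cosine/sine profiles of a travelling wave, needed to equate your two descriptions of the profiles of $\nabla\times F$; it follows by evaluating at $z=0$ and $z=\pi/(2k)$ (as in the proof of Lemma \ref{lem:v=0}), so there is no real gap. What your argument buys: it is conceptual, it explains \emph{why} the lemma holds (the $\star$-action is a rigid rotation of the physical field, and the curl commutes with proper rotations), it isolates the clean structural identity $\cnabla\times\begin{pmatrix} g\star U\\ g\star\wt U\end{pmatrix}(x)=\begin{pmatrix} R_g\,P(g^{-1}x)\\ R_g\,\wt P(g^{-1}x)\end{pmatrix}$ of which the norm equality is an immediate corollary, and it would apply verbatim to any rotation-equivariant operator. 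What the paper's computation buys: it is self-contained in the two-dimensional setting and needs neither the three-dimensional correspondence nor the (standard, but itself requiring proof) equivariance formula for the curl.
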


\begin{proof}
	In the following, when we write $\partial_{x_i} U_j(g^{-1}x)$ we always mean $(\partial_{x_i} U_j)(g^{-1}x)$.\\
	Observe that
	\begin{eqnarray*}
		\cnabla\times \begin{pmatrix} g\star U\\ g\star \wt U  \end{pmatrix} &:=& \begin{pmatrix}
			\partial_{x_1} U_3 (g^{-1}x)\sin\alpha+\partial_{x_2} U_3 (g^{-1}x)\cos\alpha- k\wt U_1 (g^{-1}x) \sin\alpha - k\wt U_2 (g^{-1}x) \cos\alpha\\
			k \wt U_1 (g^{-1}x) \cos\alpha - k \wt U_2 (g^{-1}x) \sin\alpha-\partial_{x_1} U_3(g^{-1}x)\cos\alpha+\partial_{x_2} U_3(g^{-1}x) \sin\alpha\\
			\partial_{x_1}U_2 (g^{-1}x)-\partial_{x_2}U_1 (g^{-1}x) \\
			\partial_{x_1} \wt U_3 (g^{-1}x)\sin\alpha+\partial_{x_2} \wt U_3 (g^{-1}x)\cos\alpha+k U_1 (g^{-1}x) \sin\alpha +k U_2 (g^{-1}x) \cos\alpha\\
			-k U_1 (g^{-1}x) \cos\alpha +k U_2 (g^{-1}x) \sin\alpha-\partial_{x_1}\wt U_3(g^{-1}x)\cos\alpha+\partial_{x_2} \wt U_3(g^{-1}x) \sin\alpha\\
			\partial_{x_1}\wt U_2(g^{-1}x)-\partial_{x_2} \wt U_1(g^{-1}x)
		\end{pmatrix}\\
		&=&
		\begin{pmatrix}
			\partial_{x_1} U_3 (g^{-1}x)\sin\alpha+\partial_{x_2} U_3 (g^{-1}x)\cos\alpha- k\wt U_1 (g^{-1}x) \sin\alpha - k\wt U_2 (g^{-1}x) \cos\alpha\\-\partial_{x_1} U_3(g^{-1}x)\cos\alpha+\partial_{x_2} U_3(g^{-1}x) \sin\alpha+
			k \wt U_1 (g^{-1}x) \cos\alpha - k \wt U_2 (g^{-1}x) \sin\alpha\\
			\partial_{x_1}U_2 (g^{-1}x)-\partial_{x_2}U_1 (g^{-1}x) \\
			\partial_{x_1} \wt U_3 (g^{-1}x)\sin\alpha+\partial_{x_2} \wt U_3 (g^{-1}x)\cos\alpha+k U_1 (g^{-1}x) \sin\alpha +k U_2 (g^{-1}x) \cos\alpha\\
			-\partial_{x_1}\wt U_3(g^{-1}x)\cos\alpha+\partial_{x_2} \wt U_3(g^{-1}x) \sin\alpha
			-k U_1 (g^{-1}x) \cos\alpha +k U_2 (g^{-1}x) \sin\alpha\\
			\partial_{x_1}\wt U_2(g^{-1}x)-\partial_{x_2} \wt U_1(g^{-1}x)
		\end{pmatrix}
	\end{eqnarray*}
	and we easily conclude that
	\[
	\Big|\cnabla\times \begin{pmatrix} (g\star U)(x)\\ (g\star \wt U)(x) 
	\end{pmatrix}\Big|^2=\Big|\cnabla\times \begin{pmatrix} U (g^{-1}x)\\ \wt U(g^{-1}x) 
	\end{pmatrix} \Big|^2. \qedhere
	\]
\end{proof}

Let 
\begin{eqnarray*}
\cV_{\cS\cO(2)}&:=& \big\{v\in\cV:\; g\star v=v\hbox{ for any }g\in\cS\cO(2)\big\},\\
\cW_{\cS\cO(2)}&:=& \big\{w\in\cW:\; g\star w=w\hbox{ for any }g\in\cS\cO(2)\big\}.
\end{eqnarray*}
From now on, for $v \in \cV_{\cS\cO(2)}$, $w(v)$ is the unique minimizer of
$$
w\mapsto \frac{1}{2}\int_{\R^2} V(x)|v+w|^2\, dx+\int_{\R^2}F(v+w) \, dx
$$
in $\cW_{\cS\cO(2)}$.

\begin{Lem}\label{lem:Cer}
The functional $\widetilde J|_{\cV_{\cS\cO(2)}}$ 
satisfies the Cerami condition at every nonngative level (that is, every Cerami sequence at a nonnegative level has a converging subsequence).
\end{Lem}
\begin{proof}
First, we claim that $\cV_{\cS\cO(2)}$ is compactly embedded into $L^r(\R^2)^6$ for any $r>2$.
Indeed, let $(v_n)\subset \cV_{\cS\cO(2)}$ be such that $v_n\weakto 0$ as $n \to \infty$. Passing to a subsequence $v_n\to 0$ a.e. on $\R^2$ as $n \to \infty$. Then, $(|v_n|)$ is bounded in $H^1(\R^2)$ and $\cO(2)$-invariant. Hence, $|v_n|\to v_0$ in $L^r(\R^2)$ along a subsequence as $n \to \infty$, which yields $v_0=0$.\\
Next, let $(v_n) \subset \cV_{\cS\cO(2)}$ be a Cerami sequence for $\widetilde J|_{\cV_{\cS\cO(2)}}$ 
as in the statement. From Lemma \ref{lem:bbd}, there exists $v \in \cV_{\cS\cO(2)}$ such that $v_n \weakto v$ along a subsequence as $n \to \infty$. In particular, up to taking other subsequences, $v_n \to v$ in $L^\Phi$ and a.e. in $\R^2$ as $n \to \infty$. From the continuity of $w \colon L^\Phi_{\cS\cO(2)} \to \cW_{\cS\cO(2)}$, which is proved as in \cite[Lemma 4.6]{MeScSz}, $(w_n) \subset \cW$ is bounded, where $w_n := w(v_n)$.
There holds
\begin{equation*}
\begin{split}
o_n(1) & = \widetilde{J}'(v_n)(v_n) - \widetilde{J}'(v_n)(v)\\
& = b_L(v_n,v_n - v) - \int_{\R^2} V(x) \langle v_n + w_n,v_n - v \rangle + \langle f(v_n + w_n),v_n - v \rangle \, dx\\
& = b_L(v_n,v_n - v) - \int_{\R^2} V(x) \langle v_n,v_n - v \rangle \, dx + o_n(1),
\end{split}
\end{equation*}
which implies that $v_n \to v$ as $n \to \infty$.
\end{proof}

\begin{Rem}
Our proof can be also adopted in the situation where the Ambrosetti--Rabinowitz-type condition (F3) is not satisfied, e.g. \eqref{ex:Nonpower} with $p>q=2$. Then one needs to consider a monotonicity-type condition as in \cite{BartschMederski,MederskiENZ,MederskiReichel} given as follows:
\begin{itemize}
	\item[(F4)] If $ \langle f(u),v\rangle = \langle f(v),u\rangle >0$, then
	$\ \displaystyle F(u) - F(v)
	\le \frac{\langle f(u),u\rangle^2-\langle f(x,u),v\rangle^2}{2\langle f(u),u\rangle}$.\\
	Moreover, $\langle f(u),u\rangle\geq 2F(u)$ for every  $u\in\R^2$.
\end{itemize}
If $F$ satisfies (F1), (F2), (F4) and $F(u) \gtrsim \Phi(|u|)$ for every $u\in\R^2$, then one can work with the Nehari--Pankov manifold for $J$, or Nehari manifold for $\wt J$:
$$\cN:=\{u\in (\cV\oplus\cW)\setminus\cW: J'(u)|_{\R u\oplus \cW}=0\}=\{u\in \V\setminus\{0\}: \wt J'(u)(u)=0\},$$ and build a critical point theory on this constraint as in \cite{BartschMederski,BartschMederskiJFA}. We leave the details to the reader.
\end{Rem}

\section{Non-TE modes}\label{sec:NonTE}
In this section, we will work with the space
\[
X_{\SO(2)} := \left\{u \in X : u = g \star u \text{ for all } g \in \SO(2)\right\}.
\]

For any $\SO(2)$-equivariant profile $U\colon\R^2\to\R^3$, there is a unique decomposition 
$$U=U_\rho+U_\tau+U_\zeta,$$ such that $U_\rho$, $U_\tau$, $U_\zeta$ are vector fields given by
\begin{equation*}
U_\rho(x) = \frac{\alpha_\rho(x)}{|x|} \begin{pmatrix}
x_1\\
x_2\\
0
\end{pmatrix}, \quad
U_\tau(x) = \frac{\alpha_\tau(x)}{|x|} \begin{pmatrix}
-x_2\\
x_1\\
0
\end{pmatrix}, \quad
U_\zeta(x) = \alpha_\zeta(x) \begin{pmatrix}
0\\
0\\
1
\end{pmatrix},
\end{equation*}
for some  $\SO(2)$-invariant  functions $\alpha_\rho,\alpha_\tau,\alpha_\zeta \colon \R^2 \to \R$, cf.  \cite[Lemma 1]{ABDF}. In fact, $\alpha_\rho(x) = \bigl(U_1(x)x_1+U_2(x)x_2\bigr) |x|^{-1}$, $\alpha_\tau(x) = \bigl(-U_1(x)x_2+U_2(x)x_1\bigr) |x|^{-1}$, $\alpha_\zeta(x) = U_3(x)$ for a.e. $x\in\R^2$.

Now for any $u=\begin{pmatrix} U \\ \wt U \end{pmatrix}\in X_{\SO(2)}$ we define
\begin{equation*}
u_\rho(x) = \begin{pmatrix} U_\rho \\ \wt U \end{pmatrix}, \quad
u_\tau(x) = \begin{pmatrix} U_\tau \\ \mathbf{0} \end{pmatrix}, \quad
u_\zeta(x) = \begin{pmatrix} U_\zeta \\ \mathbf{0} \end{pmatrix},
\end{equation*}
where $\mathbf{0}=\begin{pmatrix} 0\\ 0\\0 \end{pmatrix}.$

\begin{Lem}\label{le:ABDF}
For every $u \in X_{\SO(2)}$ we have $u_\tau, u_\rho+u_\zeta \in X_{\SO(2)}$ and $u = u_\rho + u_\tau + u_\zeta$, and, for a.e. $x \in \R^2$, $u_\rho(x)$, $u_\tau(x)$, and $u_\zeta(x)$ are mutually orthogonal in $\R^6$. Moreover, if $u\in \cV_{\SO(2)}$, then $u_\tau, u_\rho + u_\zeta \in \cV_{\SO(2)}$. If $u\in \cW_{\SO(2)}$, then $u_\tau=0$ and $u_\rho + u_\zeta \in \cW_{\SO(2)}$. Finally,
\begin{equation}\label{eq:curlsymm}
\langle\cnabla\times u_\rho (x), \cnabla\times u_\tau(x)\rangle=\langle\cnabla\times u_\tau (x), \cnabla\times u_\zeta(x)\rangle=0
\end{equation}
for a.e. $x\in\R^2$.
\end{Lem}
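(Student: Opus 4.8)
The plan is to read off the decomposition and the pointwise identities directly from the structure of the three building blocks, and to settle membership in $\cV_{\SO(2)}$ and $\cW_{\SO(2)}$ separately before recombining. The identity $u=u_\rho+u_\tau+u_\zeta$ is immediate from $U=U_\rho+U_\tau+U_\zeta$ together with the convention that the whole second profile $\wt U$ is placed in $u_\rho$. For the pointwise orthogonality in $\R^6$ I would expand the three Euclidean inner products: since the second slots of $u_\tau$ and $u_\zeta$ vanish, each product reduces to an inner product of the $\R^3$-vectors $(x_1,x_2,0)$, $(-x_2,x_1,0)$, $(0,0,1)$, which are mutually orthogonal a.e. Writing $J=\begin{pmatrix} 0 & -1 \\ 1 & 0\end{pmatrix}$ and using that $g$ commutes with $J$, one checks in one line that $g\star U_\tau=U_\tau$, and similarly for the other pieces, because $\alpha_\rho,\alpha_\tau,\alpha_\zeta$ are radial (cf. \cite[Lemma 1]{ABDF}); it then remains only to locate the pieces in the correct spaces.

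For $u=v\in\cV_{\SO(2)}$ the key point is that the azimuthal field $U_\tau=\tfrac{\alpha_\tau}{|x|}(-x_2,x_1,0)$ with radial $\alpha_\tau$ is divergence free in the plane, $\partial_{x_1}(U_\tau)_1+\partial_{x_2}(U_\tau)_2=\partial_\theta(\alpha_\tau/|x|)=0$, and has vanishing third component, whereas $U_\zeta$ contributes nothing to the in-plane divergence but only to the axial slot. Writing $U'=(U_1,U_2)$ for the in-plane part and substituting $U=U_\rho+U_\tau+U_\zeta$ into the two constraints defining $\cV$, the first becomes $\div(U_\rho)'+k\wt U_3=0$ and the second becomes $\div\wt U'-k(U_\zeta)_3=0$; from these rewritten forms one sees at once that both $u_\tau=(U_\tau,\mathbf 0)$ and $u_\rho+u_\zeta=u-u_\tau$ satisfy the two divergence conditions, hence lie in $\cV$. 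The $H^1$-regularity of each equivariant piece, including the behaviour of the $1/|x|$ factors near the origin, is exactly what \cite[Lemma 1]{ABDF} supplies, so $u_\tau,u_\rho+u_\zeta\in\cV_{\SO(2)}$.

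For $u=w\in\cW_{\SO(2)}$ I would argue by density. A direct computation shows that $\cnabla$ is $\SO(2)$-equivariant, $g\star\cnabla(\phi,\tphi)=\cnabla(\phi\circ g^{-1},\tphi\circ g^{-1})$, so averaging over $\SO(2)$ shows that the generators $\cnabla(\phi,\tphi)$ with radial $\phi,\tphi$ are dense in $\cW_{\SO(2)}$. For such a generator the first profile is $(\nabla\phi,k\tphi)$, whose in-plane part $\nabla\phi$ is radial because $\phi$ is radial, so its azimuthal ($\tau$-)component vanishes; thus the $\tau$-projection kills every radial generator. Since the pointwise orthogonal $\tau$-projection is $L^2(\R^2)^6$-continuous and $\cW\hookrightarrow L^2(\R^2)^6$, passing to the limit gives $w_\tau=0$, whence $w_\rho+w_\zeta=w\in\cW_{\SO(2)}$. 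Finally, for a general $u\in X_{\SO(2)}$ I write $u=v+w$ with $v\in\cV_{\SO(2)}$, $w\in\cW_{\SO(2)}$ (the Helmholtz projections of Theorem \ref{th:Helmholtz} commute with the isometric action $\star$); by linearity of the $\tau$-projection, $u_\tau=v_\tau$ and $u_\rho+u_\zeta=(v_\rho+v_\zeta)+w$, both in $X_{\SO(2)}$.

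It remains to verify \eqref{eq:curlsymm}, which I would do by evaluating the three curls from the definition of $\cnabla\times$. Because radial in-plane fields are curl free, the third component of $\cnabla\times u_\rho$ vanishes, so the nonzero components of $\cnabla\times u_\tau$, $\cnabla\times u_\zeta$, $\cnabla\times u_\rho$ sit in positions $\{3,4,5\}$, $\{1,2\}$, $\{1,2,4,5,6\}$ respectively. Hence $\langle\cnabla\times u_\tau,\cnabla\times u_\zeta\rangle=0$ because the index sets are disjoint, while $\langle\cnabla\times u_\rho,\cnabla\times u_\tau\rangle$ collapses to the contribution of positions $4,5$, namely $k^2\langle(U_\rho)',(U_\tau)'\rangle+k\langle(U_\tau)',\nabla\wt U_3\rangle$; both terms vanish since the azimuthal field $(U_\tau)'$ is orthogonal to the radial field $(U_\rho)'$ and to $\nabla\wt U_3$, which is radial as $\wt U_3$ is $\SO(2)$-invariant. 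The hard part is not any single computation but the $\cW$-case: rigorously obtaining $w_\tau=0$ for abstract elements of the completion $\cW$ relies on the equivariance of $\cnabla$, the density of radial generators, and the $L^2$-continuity of the $\tau$-projection, together with the regularity statement from \cite{ABDF} guaranteeing that the $\cV$-pieces genuinely lie in $H^1$.
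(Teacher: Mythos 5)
Your proof is correct, and it reaches every claim of the lemma, but the route differs from the paper's in how membership of the pieces in $\cV_{\SO(2)}$, $\cW_{\SO(2)}$, and $X_{\SO(2)}$ is obtained. The paper's proof is organized entirely around the explicit curl computations: it writes out $\cnabla\times u_\rho$, $\cnabla\times u_\tau$, $\cnabla\times u_\zeta$ componentwise (the same computation you package as disjoint index sets plus orthogonality of radial and azimuthal in-plane fields), deduces \eqref{eq:curlsymm} and the pointwise splitting $|\cnabla\times u|^2=|\cnabla\times u_\tau|^2+|\cnabla\times(u_\rho+u_\zeta)|^2$, and then reads off the $\cV$ and $\cW$ statements from that identity; in particular, for $u\in\cW_{\SO(2)}$ it gets $u_\tau=0$ because the fourth and fifth components of $\cnabla\times u_\tau$ are the zeroth-order terms $k\beta x_1$, $k\beta x_2$. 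Your treatment replaces this with two separate, more pedestrian arguments: for $\cV$ you verify the two divergence constraints directly (using $\div(U_\tau)'=0$ and $(U_\tau)_3=0$, which is exactly the bookkeeping the paper leaves implicit, since a curl identity alone cannot certify orthogonality to the $\cnabla$-fields), and for $\cW$ you prove density of radial generators in $\cW_{\SO(2)}$ by averaging over $\SO(2)$ (legitimate, since the action is isometric on $L^2\cap L^\Phi$ and $\cnabla$ is equivariant) and then pass to the limit using that the pointwise $\tau$-projection contracts in $L^2$. Your $\cW$ argument is in fact more robust than the paper's: elements of the completion $\cW$ need not have pointwise-defined curls, so the paper's pointwise splitting is formal there, whereas your density argument avoids differentiating limits altogether. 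What the paper's approach buys in exchange is economy: the single curl identity simultaneously yields \eqref{eq:curlsymm}, the membership claims, and the isometry of the action $\cS$ used immediately afterwards, and the paper carries out the computation with a more general $u_\zeta$ (allowing a nonzero second profile with radial third component), which is then reused verbatim for the action $\wt\cS$; your disjoint-support shortcut for $\langle\cnabla\times u_\tau,\cnabla\times u_\zeta\rangle=0$ is valid for the lemma as stated but would need the paper's extra term for that later variant. Both proofs delegate the $H^1$-regularity of the equivariant pieces to \cite[Lemma 1]{ABDF}, so no gap there.
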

\begin{proof}
Since $U$ and $\wt U$ are $\SO$-equivariant, 
$\alpha_\rho$, $\alpha_\tau$, and $\alpha_\zeta$ are $\SO$-invariant, hence radial. Since $u_\rho(x)$, $u_\tau(x)$, and $u_\zeta(x)$ are mutually orthogonal in $\R^6$, then
$u_\rho,u_\tau,u_\zeta \in L^\Phi_{\SO(2)}$.  
Observe that
$$u_\rho(x) = \begin{pmatrix}
\alpha(x) x_1\\
\alpha(x) x_2\\
0\\
\wt\alpha_1(x)\\
\wt\alpha_2(x)\\
\wt\alpha_3(x)
\end{pmatrix},\quad
u_\tau(x) = \begin{pmatrix}
-\beta(x) x_2\\
\beta(x) x_1\\
0\\
0\\
0\\
0
\end{pmatrix},\quad u_\zeta(x) =
\begin{pmatrix}
0\\
0\\
\gamma(x)\\
0\\
0\\
0
\end{pmatrix}$$
for some $\wt \alpha_1, \wt \alpha_2$, and radial $\alpha,\beta,\gamma,\wt \alpha_3$.
Note that
$$\cnabla\times u_\rho(x) = \begin{pmatrix}
-k \wt\alpha_2\\
k\wt\alpha_1\\
\partial_{x_1}\alpha x_2-\partial_{x_2}\alpha x_1\\
\partial_{x_2}\wt\alpha_3 + k \alpha x_2\\
-k\alpha x_1-\partial_{x_1}\wt\alpha_3\\
\partial_{x_1}\wt \alpha_2-\partial_{x_2}\wt \alpha_1
\end{pmatrix}=
 \begin{pmatrix}
-k \wt\alpha_2\\
k\wt\alpha_1\\
0\\
\partial_{x_2}\wt\alpha_3 + k \alpha x_2\\
-k\alpha x_1-\partial_{x_1}\wt\alpha_3\\
\partial_{x_1}\wt \alpha_2-\partial_{x_2}\wt \alpha_1
\end{pmatrix},\quad
\cnabla\times u_\tau(x) = \begin{pmatrix}
0\\
0\\
\partial_{x_1}(\beta x_1)+\partial_{x_2}(\beta x_2)\\
k \beta x_1\\
k\beta x_2\\
0
\end{pmatrix},$$
and since
$$(\partial_{x_2}\wt\alpha_3 + k \alpha x_2)x_1+(-k\alpha x_1-\partial_{x_1}\wt\alpha_3)x_2=\partial_{x_2}\wt\alpha_3x_1-\partial_{x_1}\wt\alpha_3x_3=0,$$
then 
$$\langle\cnabla\times u_\rho (x), \cnabla\times u_\tau(x)\rangle=0.$$
Moreover
$$\cnabla\times u_\zeta(x) = \begin{pmatrix}
\partial_{x_2}\gamma\\
-\partial_{x_1}\gamma\\
0\\
0\\
0\\
0
\end{pmatrix}$$
and
$$\langle\cnabla\times u_\tau (x), \cnabla\times u_\zeta(x)\rangle=0.$$
Therefore, \eqref{eq:curlsymm} holds and 
$$|\cnabla\times u|^2=|\cnabla\times u_\tau|^2+|\cnabla\times (u_\rho+u_\zeta)|^2.$$
Hence, if  $u\in \cV_{\SO(2)}$, then $u_\tau, u_\rho + u_\zeta \in \cV_{\SO(2)}$. If $u\in \cW_{\SO(2)}$, then $u_\tau=0$ and $u_\rho + u_\zeta \in \cW_{\SO(2)}$. In view of the Helmholtz decomposition Theorem \ref{th:Helmholtz}, we see that $u_\tau \in X_{\SO(2)}$, hence $u_\rho + u_\zeta = u - u_\tau \in X_{\SO(2)}$.
\end{proof}

We consider the action $\cS \colon X_{\SO(2)} \to X_{\SO(2)}$ given by
\begin{equation*}
\cS u := u_\rho - u_\tau + u_\zeta
\end{equation*}
and note that \eqref{eq:curlsymm}
yields
\begin{equation*}
\int_{\R^2} |\cnabla \times (\cS u)|^2 \, dx = \int_{\R^2} |\cnabla \times u_\tau|^2 \, dx + \int_{\R^2} |\cnabla \times (u_\rho + u_\zeta)|^2 \, dx = \int_{\R^2} |\cnabla \times u|^2 \, dx.
\end{equation*}
Hence we obtain that  $\cS$ in an isometry and $J|_{X_{\SO(2)}}$ is also invariant under $\cS$. Let
$$\big(X_{\SO(2)}\big)^\cS:=\big\{u\in X_{\SO(2)}:\cS u =u\big\}=\big\{u\in X_{\SO(2)}: u =u_\rho+u_\zeta\big\},$$
and in view of  the Palais principle of symmetric criticality, critical points of $J|_{(X_{\SO(2)})^\cS}$ are critical points of the free functional $J$.

In a similar way we decompose the profile $\wt U\colon\R^2\to\R^3$
$$\wt U=\wt U_\rho+\wt U_\tau+\wt U_\zeta,$$ where $\wt U_\rho$, $\wt U_\tau$, $\wt  U_\zeta$ are vector fields given by
\begin{equation*}
\wt  U_\rho(x) = \frac{\wt  \alpha_\rho(x)}{|x|} \begin{pmatrix}
x_1\\
x_2\\
0
\end{pmatrix}, \quad
\wt  U_\tau(x) = \frac{\wt \alpha_\tau(x)}{|x|} \begin{pmatrix}
-x_2\\
x_1\\
0
\end{pmatrix}, \quad
\wt U_\zeta(x) = \wt \alpha_\zeta(x) \begin{pmatrix}
0\\
0\\
1
\end{pmatrix},
\end{equation*}
for some  $\SO(2)$-invariant  functions $\wt  \alpha_\rho,\wt  \alpha_\tau,\wt  \alpha_\zeta \colon \R^2 \to \R$. Arguing as in Lemma \ref{le:ABDF} we get
$$\langle\cnabla\times\wt  u_\rho (x), \cnabla\times \wt u_\tau(x)\rangle=\langle\cnabla\times\wt  u_\tau (x), \cnabla\times \wt u_\zeta(x)\rangle=0$$
for a.e. $x\in\R^2$, where
\begin{equation*}
\wt u_\rho(x) = \begin{pmatrix} U \\ \wt U_\rho \end{pmatrix}, \quad
\wt u_\tau(x) = \begin{pmatrix}  \mathbf{0}\\ \wt U_\tau  \end{pmatrix}, \quad
\wt u_\zeta(x) = \begin{pmatrix}  \mathbf{0}\\ \wt U_\zeta  \end{pmatrix}.
\end{equation*}
Therefore we may define  the action $\wt \cS \colon \big(X_{\SO(2)}\big)^\cS\to \big(X_{\SO(2)}\big)^\cS$ given by
\begin{equation*}
\wt\cS u := \wt u_\rho - \wt u_\tau + \wt u_\zeta
\end{equation*}
and observe that
$$\Big(\big(X_{\SO(2)}\big)^\cS\Big)^{\wt \cS}=\Big\{u= \begin{pmatrix} U \\ \wt U \end{pmatrix}\in X_{\SO(2)}: U =U_\rho+U_\zeta,\; \wt U =\wt U_\rho+\wt U_\zeta\Big\}.$$

\begin{altproof}{Theorem \ref{th:main}}
In virtue of Lemma \ref{le:ABDF}, we consider the decomposition
\begin{equation*}
\Big(\big( X_{\SO(2)} \big)^\cS\Big)^{\wt \cS} = \Big(\big( \cV_{\SO(2)} \big)^\cS\Big)^{\wt \cS} \oplus \Big(\big( \cW_{\SO(2)} \big)^\cS\Big)^{\wt \cS}.
\end{equation*}
Since $\Big(\big( \cV_{\SO(2)} \big)^\cS\Big)^{\wt \cS}$ is an infinite-dimensional separable Hilbert space, we can consider an orthonormal basis $(v_n)_{n=1}^\infty \subset \Big(\big( \cV_{\SO(2)} \big)^\cS\Big)^{\wt \cS}$. Next, for every $n \ge 1$, we define
\begin{equation*}
Y_n := \bigoplus_{j=1}^n \span \{v_j\} \quad \text{and} \quad Z_n := \overline{\bigoplus_{j=n}^\infty \span \{v_j\}}.
\end{equation*}
In view of Lemma \ref{lem:Cer}, we can directly apply the Fountain Theorem with a Cerami condition \cite[Theorem 2.9]{Liu2010}, cf. \cite{Bartsch}, and conclude by the Palais principle of symmetric criticality. In particular, we need to prove the existence of two sequences $(r_n)$ and $(\rho_n)$ such that $\rho_n > r_n > 0$ for every $n \ge 1$ and the following properties hold:
\begin{itemize}
	\item [(i)] $\displaystyle \lim_n \left(\inf\left\{ \wt J(v) : v \in Z_n \text{ and } \|v_n\| = r_n \right\}\right) = \infty$,
	\item [(ii)] For all $n \ge 1$, $\displaystyle \max\left\{ \wt J(v) : v \in Y_n \text{ and } \|v\| = \rho_n \right\} \le 0$.
\end{itemize}
We check (i) first. Fix $\varepsilon \in \left(0,\frac12 k^2 - \frac12 \esssup V\right)$ and let $C_\varepsilon > 0$ such that \eqref{eq:Fineq} holds. Then, denoting $c_\varepsilon := \frac12 k^2 - \frac12 \esssup V - \varepsilon > 0$, from \eqref{eq:pos}, for every $v \in \cV$ there holds
\begin{equation*}
\wt J(v) \ge c_\varepsilon \|v\|^2 - C_\varepsilon \int_{\R^2} \Phi(|v|) \, dx.
\end{equation*}
Let $\delta \in (0,c_\varepsilon / C_\varepsilon)$. From (N2), there exists $r > 0$ such that $\Phi(t) \le \delta t^2$ for all $|t| \le r$. Now, let $\kappa > 1$ be the constant appearing in the characterization of the $\Delta_2$ condition, that is, $t \Phi'(t) \le \kappa \Phi(t)$ for all $t \in \R$, cf. Lemma \ref{lem:cond2}. Then, there exists $K > 0$ such that $\Phi(t) \le K |t|^\kappa$ for all $|t| \ge r$. Observe that this and (N3) yield $\kappa > 2$. There holds
\begin{equation*}
\begin{split}
\wt J(v) & \ge c_\varepsilon \|v\|^2 - C_\varepsilon \left( \int_{\{|v| < r\}} \Phi(|v|) \, dx + \int_{\{|v| \ge r\}} \Phi(|v|) \, dx \right)\\
& \ge c_\varepsilon \|v\|^2 - \delta C_\varepsilon |v|_2^2 - K C_\varepsilon |v|_\kappa^\kappa \ge (c_\varepsilon - \delta C_\varepsilon) \|v\|^2 - K C_\varepsilon |v|_\kappa^\kappa.
\end{split}
\end{equation*}
Letting
\begin{equation*}
\beta_n := \sup \left\{ |v|_\kappa : v \in Z_n \text{ and } \|v\| = 1 \right\}
\end{equation*}
we obtain, for all $v \in Z_n$,
\begin{equation*}
\wt J(v) \ge (c_\varepsilon - \delta C_\varepsilon) \|v\|^2 - K C_\varepsilon \beta_n^\kappa \|v\|^\kappa.
\end{equation*}
Consequently, choosing
\begin{equation*}
r_n := \left( \frac{c_\varepsilon - \delta C_\varepsilon}{2 K C_\varepsilon \beta_n^\kappa} \right)^{1/(\kappa-2)}
\end{equation*}
we obtain, for all $v \in Z_n$ such that $\|v\| = 1$,
\begin{equation*}
\wt J(v) \ge \frac{c_\varepsilon - \delta C_\varepsilon}{2} r_n^2.
\end{equation*}
Arguing as in \cite[Lemma 3.8]{Willem} and using the compact embedding $H^1_{\SO(2)}(\R^2)^6 \hookrightarrow \hookrightarrow L^\kappa(\R^2)^6$, where
\[
H^1_{\SO(2)}(\R^2)^6 := \left\{ u \in H^1(\R^2)^6 : g \star u = u \text{ for all } g \in \SO(2) \right\},
\]
we get that $\lim_n r_n = \infty$.\\
Finally, (ii) is a consequence of Lemma \ref{lem:neg}.
\end{altproof}

\begin{altproof}{\eqref{eq:EM_Energy}}
Let $u$ be a critical point of $J$ and $E$ is of the form \eqref{eq:travel_wave}.
Observe that
\begin{eqnarray*}
	\langle E,D\rangle&=&	-\eps(x)\omega^2|E|^2+\chi(\langle |E|^2\rangle)|E|^2\\
	&=&\big(-V(x)+\chi(\frac12 |u|^2)\big)\big(|U|^2\cos^2(kx_3+\omega t)+|\wt U|^2\sin^2(kx_3+\omega t)\big),\\
	&\leq &\big(-V(x)+\chi(\frac12 |u|^2)\big)|u|^2,\\
	\langle B,H\rangle &=& |B|^2 = \frac{1}{\omega^2}\big|\curlop \big(U(x) \sin(kx_3+\omega t)- \wt U(x)\cos(kx_3+\omega t)\big)\big|^2\\
	&\leq & \frac{1}{\omega^2}|\cnabla\times u|^2.
\end{eqnarray*}
Therefore
$$
\cL(t)=\frac12\int_{\R^2}\int_{a}^{a+1}\langle E,D\rangle +\langle B,H\rangle\,dx_3\, d(x_1, x_2)
\leq  \frac{1}{2}\bigl(1+\frac{1}{\omega^2}\bigr) |\cnabla\times u|^2<\infty,
$$
since $J(u)<\infty$ and $J'(u)(u)=0$.
\end{altproof}

\begin{Rem}
One may consider also  the action $-\cS \colon X_{\SO(2)} \to X_{\SO(2)}$ given by
\begin{equation*}
\cS u := -u_\rho + u_\tau - u_\zeta
\end{equation*}
and note that $-\cS$ in an isometry and $J|_{X_{\SO(2)}}$ is also invariant under $-\cS$. Then
$$\big(X_{\SO(2)}\big)^{(-\cS)}:=\big\{u\in X_{\SO(2)}:-\cS u =u\big\}=\big\{u\in X_{\SO(2)}: u=u_\tau\big\}$$
and in view of  the Palais principle of symmetric criticality, critical points of $J|_{(X_{\SO(2)})^{(-\cS)}}$ are critical points of the free functional $J$. Arguing as in Section \ref{sec:Cerami} we can find infinitely many solutions of the form $u_n= \begin{pmatrix}
U_n\\
\wt U_n
\end{pmatrix}$
with
\begin{equation*}
U_n = \frac{\beta_n(x)}{|x|} \begin{pmatrix}
-x_2\\
x_1\\
0
\end{pmatrix},\quad
\wt U_n = 0
\end{equation*}
for some radial $\beta_n:\R^2\to\R$.  Hence $E=U_n\cos(kx_3+\omega t)$ is a TE-mode. It is not clear however, if for the Kerr nonlinearity  $\beta_n$ coincide with the solutions obtained in \cite{McLeodStuartTroy} by means of ODE methods.
\end{Rem}

\vspace{8mm}
{\bf Acknowledgments.} We thank the anonymous reviewer their valuable comments, which helped us improve our manuscript.
\vspace{2mm}

{\bf Conflict of interest.} All authors declare that they have no conflicts of interest.
\vspace{2mm}

{\bf Ethics approval.} Ethical approval was not sought for the present study.
\vspace{2mm}

{\bf Funding.} The authors were partly supported by the National Science Centre, Poland (Grant No. 2020/37/B/ST1/02742). J.S. is a member of GNAMPA (INdAM) and is supported by the GNAMPA project {\em Metodi variazionali e topologici per alcune equazioni di Schr\"odinger nonlineari}.

%
%
\vspace{2mm}

{\bf Data availability statement.}  Data sharing not applicable to this article as no datasets were generated or analysed during the current study.

\end{document}